\documentclass{amsart}
\usepackage{amssymb,amscd}

 \newtheorem{thm}{Theorem}[section]
 \newtheorem{cor}[thm]{Corollary}
 \newtheorem{lemma}[thm]{Lemma}
 \newtheorem{prop}[thm]{Proposition}
 \theoremstyle{definition}
 \newtheorem{defn}[thm]{Definition}
 \newtheorem{exmp}[thm]{Example}
 \theoremstyle{remark}
 \newtheorem{rem}[thm]{Remark}

 \numberwithin{equation}{subsection}
 \newtheorem{ack}{Acknowledgment}
 

\newcommand{\UU}{\text{$\mathcal{U}$}}
\newcommand{\FF}{\text{$\mathcal{F}$}}
\newcommand{\GG}{\text{$\mathcal{G}$}}
\newcommand{\VV}{\text{$\mathcal{V}$}}

\newcommand{\HH}{\text{$\mathcal{H}$}}

\newcommand{\LB}{\text{$\Lambda$}}
\newcommand{\sg}{\text{$\sigma$}}
\newcommand{\second}{{\text{\rm II}}}

\newcommand{\infim}{\operatorname{inf}}

\newcommand{\Cat}{\operatorname{Cat}}
\newcommand{\dm}{\operatorname{dim}}
\newcommand{\str}{\operatorname{sat}}

\newcommand{\intr}{\operatorname{int}}

\newcommand{\Image}{\operatorname{Im}}

\newcommand{\supp}{\operatorname{supp}}
\newcommand{\dom}{\operatorname{dom}}

\newcommand{\length}{\operatorname{length}}
\newcommand{\Homeo}{\operatorname{Homeo}}

\newcommand{\growth}{\operatorname{growth}}

        \newcommand{\field}[1]{\text{$\mathbb{#1}$}}
        \newcommand{\N}{\field{N}}

        \newcommand{\R}{\field{R}}


\newdimen\theight
\def\TeXref#1{%
             \leavevmode\vadjust{\setbox0=\hbox{{\tt
                     \quad\quad  {\small \textrm #1}}}%
             \theight=\ht0
             \advance\theight by \lineskip
             \kern -\theight \vbox to
             \theight{\rightline{\rlap{\box0}}%
             \vss}%
             }}%


\begin{document}

\title{Secondary LS category  of measured laminations}

\author{Carlos Meni\~no Cot\'on}

\address{Departamento de Xeometr\'{\i}a e Topolox\'{\i}a\\
         Facultade de Matem\'aticas\\
         Universidade de Santiago de Compostela\\
         15782 Santiago de Compostela}

\email{carlos.meninho@gmail.com}

\thanks{Supported by MICINN (Spain): FPU program and Grant MTM2008-02640}


\begin{abstract}
In \cite{Menino,Menino1,Menino2}, we have introduced a version of the tangential LS category \cite{Macias} for foliated spaces depending on a transverse invariant measure, called the measured category. Unfortunately, the measured category vanishes easily. When it is zero, the rate of convergence to zero of the quantity involved in the definition, by taking arbitrarily large homotopies, gives a new invariant, called the secondary measured category. Several versions of classical results are proved for the secondary measured category. It is also shown that the secondary measured category is a transverse invariant related to the growth of pseudogroups.
\end{abstract}

\maketitle

\section{Introduction}
The LS category is a homotopy invariant given by the minimum number of
open subsets contractible within a topological space needed to
cover it. It was introduced by L.~Lusternik and L.~Schnirelmann in 1930's in the setting of variational problems. Many variations of this invariant has been given. In
particular, E.~Mac\'ias and H.~Colman introduced a tangential version for
foliations, where they use leafwise contractions to transversals
\cite{HellenColman, Macias}. In \cite{Menino,Menino1,Menino2}, we defined another
version of the tangential category called measured LS category. We show that this invariant is zero in many dynamically interesting examples. So we are going to introduce dynamics in the definition of the measured category giving rise to the secondary category.

We begin the paper with a brief introduction on laminations, tangential category and measured category . Also, we show what is the reason underlying the nullity of the measured category in interesting dynamics.

 We continue introducing the secondary version of the measured LS category for laminations on compact spaces. For each positive integer $n$, the $n$-measured LS category is defined like the measured LS category by considering only deformations of (leafwise) length $\le n$, where the length of a deformation is the maximun length of its leafwise paths, defined by using chains of plaques of a fixed foliated atlas, or by using a Riemannian metric in the case of smooth foliations. The measured LS category is zero just when the $n$-measured LS category tends to zero as $n\to\infty$. In this case, the secondary category is defined as the growth type of the sequence of inverses of $n$-measured LS categories. Secondary category can be defined also at the level of pseudogroups and we obtain that it is in fact a transverse invariant (considering good generators on the pseudogroup). 

Finally, we study its relation with the growth of groups and pseudogroups (see {\em e.g.\/} \cite{Walczak}), obtaining a relation between these growth types and an equality in the case of free suspensions by Rohlin groups \cite{Series}. This final result allows to compute our invariant in simple examples and give us a criteria to find groups that do not satisfy the Rohlin condition.

\section{Definition of the \LB-category}

We refer to \cite{Candel-Conlon} for the basic notion and definitions about laminations, foliated chart, foliated atlas, holonomy pseudogroup and transverse invariant measure. They are recalled here in order to fix notations.

A {\em Polish space\/} is a second countable and complete metrizable space. Let $X$ be a Polish space. A {\em foliated chart\/} in $X$ is a pair $(U,\varphi)$ such that $U$ is an open subset of $X$ and $\varphi:U\to B^n\times S$ is an homeomorphism, where $B^n$ is an open ball of $\R^n$ and $S$ is a Polish space. The sets $B^n\times\{\ast\}$ are called the {\em plaques\/} of the chart, and the sets of the form $\varphi^{-1}(\{\ast\}\times S)$ are called the {\em associated transversals\/}. The map $U\to S$ is called the projection associated to $(U,\varphi)$. A {\em foliated atlas\/} is a family of foliated charts, $\{(U_i,\varphi_i)\}_{i\in I}$, that covers $X$ and the change of coordinates between the charts preserves the plaques; i.e., they are locally of the form $\varphi_i\circ\varphi_j^{-1}(x,s)=(f_{ij}(x,s),g_{ij}(s))$; these maps $g_{ij}$ form the holonomy cocycle associated to the foliated atlas. A {\em lamination\/} $\FF$ on $X$ is a maximal foliated atlas satisfying the above hypothesis. The plaques of the foliated charts of a maximal foliated atlas form a base of a finer topology of $X$, called the {\em leaf topology\/}. The connected components of the leaf topology are called the {\em leaves\/} of the foliation. The {\em dimension\/} of the lamination is the dimension of the plaques when all of them are open sets of the same Euclidean space.

A foliated atlas, $\UU=\{(U_i,\varphi_i)\}_{i\in \N}$, is called {\em regular\/} if it satisfies the following properties:
\begin{enumerate}
  \item [(a)] It is locally finite.
  \item [(b)] If a plaque $P$ of any foliated chart $(U_i,\varphi_i)$ meets another foliated chart $(U_j,\varphi_j)$, then $P\cap U_j$ is contained in only one plaque of $(U_j,\varphi_j)$.
  \item [(c)] If $U_i\cap U_j\neq \emptyset$, then there exists a foliated chart $(V,\psi)$ such that $U_i\cup U_j\subset V$, $\varphi_i=\psi|_{U_i}$ and $\varphi_j=\psi|_{U_j}$.
\end{enumerate}
Any topological lamination admits a regular foliated atlas, also we can assume that all the charts are locally compact. For a regular foliated atlas $\UU=\{(U_i,\varphi_i)\}_{i\in \N}$ with $\varphi_i:U_i\to B_{i,n}\times S_i$, the maps $g_{ij}$ generate a pseudogroup on $\bigsqcup_iS_i$. Holonomy pseudogroups defined by different foliated atlases are equivalent in the sense of \cite{Haefliger}, and the corresponding equivalence class is called the {\em holonomy pseudogroup\/} of the lamination; it contains all the information about its transverse dynamics. A {\em transversal\/} $T$ is a topological set of $X$ so that, for every foliated chart, $(U,\varphi)$, the corresponding projection restricts to a local homeomorphism $U\cap T\to S$. A transversal is said to be {\em complete\/} if it meets every leaf. On any complete transversal, there is a representative of the holonomy psudogroup which is equivalent to the representative defined by any foliated atlas via the projection maps defined by its charts. A {\em transverse invariant measure\/} is a measure on the space of any representative of the holonomy psudogroup invariant by its local transformations; in particular, it can be given as a measure on a complete transversal invariant by the corresponding representative of the holonomy pseudogroup. A {\em transverse set\/} is a measurable set that meets each leaf in a countable set. Any transverse invariant measure can be extended to all transverse sets so that they become invariant by measurable transformations that keep each point in the same leaf \cite{Connes}.

Observe that a measure is regular if the measure of a set $B$ is the infimum of the measures of the open sets containing it and the supremum of the measures of compact sets contained in $B$. Note that measures in the conditions of the Riesz representation theorem are regular, so it is not a very restrictive condition on measures.

\begin{rem}
Observe that the tangential model of the charts could be changed in order to give a more general notion of lamination: instead of taking open balls of $\R^n$ as $B^n$, we could take connected and locally contractible Polish spaces or separable Hilbert spaces. Also, it is possible to define the notion of $C^r$ foliated structure by assuming that the tangential part of changes of coordinates are $C^r$, with the leafwise derivatives of order $\le r$ depending continuously on the transverse coordinates. We can speak about regular atlases in Hilbert laminations but we cannot assume that its foliated charts are locally compact.
\end{rem}

Let us recall the definition of tangential category \cite{HellenColman,Macias}.
A lamination $(X,\FF)$ induces a foliated
measurable structure $\FF_U$ in each open set $U$. The space $U\times\R$ admits an
obvious foliated structure $\FF_{U\times\R}$ whose leaves are
products of leaves of $\FF_U$ and $\R$. Let $(Y,\GG)$ be another measurable lamination. A foliated map $H:\FF_{U\times\R}\to \GG$ is called a {\em tangential
homotopy\/}, and it is said that the maps $H(\cdot,0)$ and $H(\cdot,1)$ are {\em tangentially homotopic\/}. We use the term {\em
tangential deformation\/} when $\GG=\FF$ and $H( -
,0)$ is the inclusion map of $U$. A deformation such that $H( - ,1)$ is constant on the leaves of
$\FF_U$ is called a {\em tangential contraction\/} or an \FF-{\em
contraction\/}; in this case, $U$ is called a {\em tangentially categorical\/} or \FF-{\em categorical\/} open set. The {\em tangential category\/} is the lowest number of
categorical open sets that cover the measurable lamination. On one leaf foliations, this definition agrees with the classical category. The category of \FF\ is
denoted by $\Cat(\FF)$. It is clear that it is a tangential homotopy invariant.

The following result is about the structure of a tangentially categorical open set.

\begin{lemma}[Singhof-Vogt~\cite{Vogt-Singhof}]\label{l:vogt0}
Let $\FF$ be a foliation of dimension $m$ and codimension $n$ on a
manifold $M$, let $U$ be an \FF-categorical open set, let $x\in
U$, let $D\subset U$ be a transverse manifold of dimension $n$, and
suppose that $x$ belongs to the interior of $D$. Then there exists
a neighborhood $E$ of $x$ in $D$ such that any leaf of $\FF_{U}$
meets $E$ in at most one point.
\end{lemma}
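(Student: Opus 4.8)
The plan is to reduce the statement to an injectivity assertion and then prove that injectivity by means of the holonomy cocycle. Since $U$ is $\FF$-categorical, fix an $\FF$-contraction, i.e. a foliated map $H\colon\FF_{U\times\R}\to\FF$ with $H(\cdot,0)$ the inclusion of $U$ into $M$ and $H(\cdot,1)$ constant on the leaves of $\FF_U$, and set $p:=H(\cdot,1)\colon U\to M$. It suffices to produce a neighbourhood $E$ of $x$ in $D$ on which $p$ is injective: if $y,z\in E$ lie in the same leaf of $\FF_U$ then $p(y)=p(z)$ by the defining property of an $\FF$-contraction, so injectivity of $p|_E$ forces $y=z$, which is exactly the claim. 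Thus the whole problem becomes showing that $p|_E$ is injective for $E$ small enough.

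To see this I would pass to transverse coordinates. After shrinking $D$, assume $D$ is contained in a foliated chart $W_0$ of a regular atlas of $(M,\FF)$, with associated transverse projection $\rho_0$; since $D$ is transverse and $x\in\intr D$, the map $\rho_0|_D$ is a homeomorphism onto an open subset of the transversal of $W_0$. Because $t\mapsto H(x,t)$ is a path in $M$, compactness of $[0,1]$ furnishes a partition $0=t_0<\dots<t_N=1$ and foliated charts $W_0,\dots,W_{N-1}$ (with transverse projections $\rho_i$) such that $H(\{x\}\times[t_i,t_{i+1}])\subset W_i$ for every $i$. By continuity of $H$ and finiteness of the partition there is then a neighbourhood $E$ of $x$ in $D$, which we take inside the domain on which $\rho_0|_D$ is injective, such that $H(\{y\}\times[t_i,t_{i+1}])\subset W_i$ for all $y\in E$ and all $i$.

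The crucial point is that the transverse coordinate is frozen along the homotopy within each chart. Fix $y\in E$. Since $H$ is a foliated map, $H(\{y\}\times\R)$ is contained in a single leaf $L_y$ of $\FF$; hence $H(\{y\}\times[t_i,t_{i+1}])$ is a connected subset of $L_y\cap W_i$ and, by regularity of the atlas, lies in a single plaque of $W_i$, so $\rho_i$ is constant on it. Writing $a_i(y):=\rho_i(H(y,t_i))$, this yields $a_0(y)=\rho_0(y)$, $\rho_{N-1}(p(y))=a_{N-1}(y)$, and $a_i(y)=g_i(a_{i-1}(y))$, where $g_i$ is the element of the holonomy cocycle with $\rho_i=g_i\circ\rho_{i-1}$ on $W_{i-1}\cap W_i$ (applicable because $H(y,t_i)\in W_{i-1}\cap W_i$). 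Composing, $\rho_{N-1}(p(y))=h(\rho_0(y))$ for the holonomy homeomorphism $h:=g_{N-1}\circ\cdots\circ g_1$; since $h$ and $\rho_0|_E$ are injective, so is $p|_E$, which finishes the argument. The proof is essentially bookkeeping: the only delicate step is the \emph{simultaneous} choice of the partition, the charts $W_i$ and the neighbourhood $E$, and the only genuine inputs are that $H(\cdot,1)$ collapses the leaves of $\FF_U$ and that a foliated map sends leaves into leaves.
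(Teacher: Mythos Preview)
The paper does not actually prove this lemma; it is quoted from Singhof--Vogt \cite{Vogt-Singhof} and used as a black box. Your argument is therefore not being compared against anything in the paper itself.

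That said, your proof is correct and is essentially the standard one. The reduction to injectivity of $p|_E$ is clean, and the holonomy computation is the right mechanism: tracking the transverse coordinate of $H(y,t)$ through a finite chain of charts covering the path $t\mapsto H(x,t)$ shows that $\rho_{N-1}\circ p|_E = h\circ\rho_0|_E$ with $h$ a holonomy homeomorphism, whence $p|_E$ is injective. The only step one might ask you to spell out more explicitly is the simultaneous choice of $E$: you need $H(E\times[t_i,t_{i+1}])\subset W_i$ for every $i$, which follows from continuity of $H$ and a tube-lemma argument on $\{x\}\times[0,1]$, and you also need the transition maps $g_i$ to be defined on the whole of $\rho_{i-1}(H(E\times\{t_i\}))$; in a regular atlas (property~(c)) this is automatic once $H(E\times\{t_i\})\subset W_{i-1}\cap W_i$. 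With those points made explicit the argument is complete and matches the approach in the original Singhof--Vogt paper.
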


Therefore, the final step of a tangential contraction is a countable union of local transversals of the foliation. Therefore it can be measured by a transverse invariant measure.

Let $\LB$ be a transverse invariant measure for
\FF\ and let $U$ be a tangentially categorical open set. Define
$$
  \tau_\LB(U)=\infim\{\,\LB(H(U\times\{1\})\mid H\ \text{is a tangential contraction of}\ U\,\}\;.
$$
Then the \LB-{\em category\/} of $(\FF,\LB)$ is defined as
$$\Cat(\FF,\LB)=\infim_{\UU}\sum_{U\in\UU}\tau_\LB(U)\;,$$
where \UU\ runs in the coverings of $X$ by \FF-categorical open sets. Observe that, for one leaf laminations with the transverse invariant measure given by the counting measure, the measured category agrees with the classical LS category.

\section{Nullity of the \LB-category}

\LB-category inherits some of the properties of the tangential and classical LS category , like homotopy invariance or a relation with the measure of the leafwise critical points of differentiable maps \cite{Menino}. But we are interesting in the behavior of the \LB-category in interesting dynamics like minimal or ergodic laminations. Minimal sets are nuclear in the theory of dynamical systems and foliations. We shall show that, under general conditions, \LB-category is zero.

\begin{defn}
Let $(X,\FF,\LB)$ be a lamination with a transverse
invariant measure. A {\em null-transverse\/} set is a transverse set
$B$ such that $\LB(B)=0$.
\end{defn}

The following propositions are elementary.

\begin{prop}\label{p:nullset}
Let $(X,\FF,\LB)$ be a lamination with a transverse
invariant measure, and let $B$ be a null-transverse set. Then
$\Cat(\FF,\LB)$ can be computed by using only coverings of $X\setminus \str(B)$ by open sets in $X$, where $\str(B)$ denotes the saturation of $B$ in \FF. If $B$ is saturated and closed, then $\Cat(\FF,\LB)=\Cat(\FF_{X\setminus B},\LB_{X\setminus B})$.
\end{prop}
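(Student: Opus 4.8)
The plan is to prove the two assertions separately, the second being an easy consequence of the first together with the fact that $\tau_\LB$ behaves well under restriction to saturated open sets.

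\medskip

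\noindent\textbf{First assertion.} Fix a covering $\UU$ of $X$ by $\FF$-categorical open sets; I want to produce from it a covering of $X\setminus\str(B)$ by open sets of $X$ that does not increase the sum $\sum_U\tau_\LB(U)$. The key observation is that for any $\FF$-categorical open set $U$, if $H$ is a tangential contraction of $U$, then by Lemma~\ref{l:vogt0} the final image $H(U\times\{1\})$ is a countable union of local transversals, hence a transverse set, so it is $\LB$-measurable and $\tau_\LB(U)$ is genuinely computed as an infimum over contractions. Now here is the point: if $U'\subset U$ is an open subset which is still $\FF$-categorical (for instance $U'=U\setminus\str(B)$, once we check it is categorical), then restricting any contraction of $U$ to $U'\times\R$ gives a contraction of $U'$, and $H(U'\times\{1\})\subset H(U\times\{1\})$, so $\tau_\LB(U')\le\tau_\LB(U)$. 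Thus if I can cover $X\setminus\str(B)$ by the sets $U\setminus\str(B)$ (or slightly enlarged open versions thereof) while controlling categoricity and the measure, I am done. I would first reduce to the case where $B$ is contained in a single complete transversal $T$ and is Borel; then $\str(B)$ is the image of $B$ under the holonomy pseudogroup, a saturated (generally non-closed) transverse-type set. The subtlety is that $\str(B)$ need not be open or closed, so $U\setminus\str(B)$ need not be open. I would handle this by instead \emph{adding} $\str(B)$ back: cover $X$ by $\UU$, and observe that since $B$ is null-transverse, for every $U\in\UU$ and every contraction $H$ of $U$, the portion of $H(U\times\{1\})$ lying over $\str(B)$ has $\LB$-measure zero (it is a transverse set meeting $\str B$, hence $\LB$-negligible). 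Therefore $\tau_\LB(U)=\infim_H\LB\bigl(H(U\times\{1\})\setminus\str(B)\bigr)$, and the same infimum is achieved if we are allowed to cover only $X\setminus\str(B)$ and then throw in any open set(s) containing $\str(B)$ ``for free'' in the measure sense — but one must be careful, because those extra sets still need to be categorical and still contribute a $\tau_\LB$-term. The cleanest route, and the one I would take, is: given a covering $\{U_i\}$ of $X\setminus\str(B)$ by $\FF$-categorical open subsets of $X$, enlarge it to a covering of $X$ by adjoining, for each leaf meeting $\str(B)$ but not covered, a small foliated chart around a point of $B$; such a chart is $\FF$-categorical (plaques contract to points of the transversal) and its $\tau_\LB$ equals the $\LB$-measure of a piece of the local transversal inside $B$, which is $0$. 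Hence $\Cat(\FF,\LB)$ computed over all of $X$ equals the infimum over coverings of $X\setminus\str(B)$, since the added charts cost nothing.

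\medskip

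\noindent\textbf{Second assertion.} If $B$ is saturated and closed, then $X\setminus B$ is an open saturated subset, $\FF_{X\setminus B}$ is a lamination in its own right with transverse invariant measure $\LB_{X\setminus B}$, and the $\FF$-categorical open subsets of $X$ contained in $X\setminus B$ are exactly the $\FF_{X\setminus B}$-categorical open subsets of $X\setminus B$ (a contraction inside $X\setminus B$ never leaves $X\setminus B$ since $X\setminus B$ is saturated, and conversely a contraction in the sublamination is a contraction in $\FF$). Moreover $\tau_\LB$ and $\tau_{\LB_{X\setminus B}}$ agree on such sets. By the first assertion, $\Cat(\FF,\LB)$ is computed using coverings of $X\setminus\str(B)=X\setminus B$ by open sets of $X$; intersecting each such open set with $X\setminus B$ (which changes nothing, as they already lie in $X\setminus B$ up to the negligible boundary, and one checks categoricity is preserved) gives coverings of $X\setminus B$ internal to the sublamination. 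This yields $\Cat(\FF,\LB)\ge\Cat(\FF_{X\setminus B},\LB_{X\setminus B})$; the reverse inequality is immediate since any covering of $X\setminus B$ by $\FF_{X\setminus B}$-categorical sets is a covering of $X\setminus\str(B)$ by $\FF$-categorical open sets of $X$ realizing the same sum. Hence equality.

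\medskip

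\noindent\textbf{Main obstacle.} The genuinely delicate point is the first assertion when $\str(B)$ is neither open nor closed: one cannot literally ``delete'' $\str(B)$ and keep open sets, so the argument must be phrased as ``the $\LB$-mass landing on $\str(B)$ is always zero, hence coverings may ignore it,'' and one must check that the enlargement of a covering of $X\setminus\str(B)$ back to a covering of $X$ can be done by categorical charts of zero $\tau_\LB$-cost. Verifying that these added charts are $\FF$-categorical and that their contraction image sits inside a null-transverse set is the technical heart; everything else is bookkeeping with the definition of $\tau_\LB$ and the restriction behavior guaranteed by Lemma~\ref{l:vogt0}.
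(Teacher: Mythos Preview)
The paper does not actually prove this proposition; it simply declares that ``the following propositions are elementary'' and moves on. So there is no detailed argument to compare against, and the question is whether your sketch stands on its own.

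Your overall architecture is correct: the second assertion does reduce to the first, and the first assertion does come down to showing that $\str(B)$ can be covered by $\FF$-categorical open sets of arbitrarily small total $\tau_\LB$-cost. But the step where you claim that cost is \emph{zero} fails. You write that a small foliated chart around a point of $B$ has ``$\tau_\LB$ equal to the $\LB$-measure of a piece of the local transversal inside $B$, which is $0$.'' A foliated chart is open, so its associated transversal is an \emph{open} subset of the local transversal; it cannot sit inside the null-transverse set $B$ unless $B$ happens to be open, which in general it is not. The contraction of such a chart lands on its full open transversal slice, and that has positive $\LB$-measure in general. Your ``Main obstacle'' paragraph correctly locates the crux but misdiagnoses the mechanism: the added charts do not contract into a null set, only into a set of small measure.

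The missing ingredient is regularity of $\LB$, which you never invoke. Fix a regular atlas $\{(V_j,\varphi_j)\}$ with associated transversals $T_j$. One first checks $\LB(T_j\cap\str(B))=0$ for each $j$: the holonomy pseudogroup on $\bigsqcup_j T_j$ is countably generated, so $T_j\cap\str(B)$ is covered by countably many holonomy translates of pieces of $B$, each null by invariance. Then, by regularity of $\LB$, choose open $S_j\subset T_j$ with $T_j\cap\str(B)\subset S_j$ and $\LB(S_j)<\varepsilon/2^j$. The $V_j$-saturation of $S_j$ is an open $\FF$-categorical set (a union of plaques in a single chart) with $\tau_\LB\le\LB(S_j)$, and together these saturations cover $\str(B)$. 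Adjoining them to a given covering of $X\setminus\str(B)$ yields a covering of $X$ whose $\tau_\LB$-sum exceeds the original by less than $\varepsilon$, which is exactly what is needed.
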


Now, we explain an important lemma that allows us to cut tangentially categorical open sets into small ones, in order to compute the \LB-category when the measure is finite on compact sets. In this process, a null transverse set is generated, but we know that this kind of set can be removed in the computation of \LB-category.

\begin{lemma}\label{l:disgregation}
Let $(\FF,\LB)$ be a lamination with a transverse invariant measure that is finite on compact sets. Let $U$ be a tangentially categorical open set and let $T$ be a complete transversal. Then there exist a partition $\{F,U_n\}_{n\in\N}$  of $U$ such that $F$ is a closed and null transverse set, each $U_n$ is open, and there exists a tangential homotopy $H:\bigcup_n U_n\times \R\to \FF$ such that $H(\bigcup_n U_n\times\{1\})\subset T$.
\end{lemma}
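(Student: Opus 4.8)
The plan is to use the Singhof–Vogt lemma (Lemma~\ref{l:vogt0}) to reduce, locally, the problem to a covering of $U$ by "small" open sets that each embed injectively into the leaf space, and then splice these local contractions with a fixed contraction of $U$ so that the resulting images land in $T$. First I would fix an \FF-contraction $G\colon U\times\R\to\FF$ of $U$, so $G(U\times\{1\})=:Z$ is, by Lemma~\ref{l:vogt0} and the remark following it, a countable union of local transversals of \FF, hence a transverse set; since \LB\ is finite on compact sets and $X$ is compact, we may moreover arrange (shrinking $U$ against a regular, locally compact atlas if necessary, and using regularity of \LB) that $\overline Z$ is a transverse set of finite measure. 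The point of cutting $U$ is to replace $Z$ by a piece of the \emph{complete} transversal $T$: at each point $z\in Z$ one follows a leafwise path inside the relevant plaque chain from $z$ to a point of $T$, and the holonomy along such a path is a local homeomorphism of the leaf space. Using the regular atlas, cover $Z$ by countably many such local transversal pieces $Z_k$ on which a single holonomy transformation $h_k$ carries $Z_k$ into $T$; pulling back through $G$, the sets $G(\cdot,1)^{-1}(Z_k)$ give an open cover of $U$, from which I extract a countable family $\{U_n\}$ by the usual disjointification trick: take $U_n$ to be an open set slightly smaller than the $n$-th piece, chosen so that the $U_n$ are pairwise disjoint, and set $F=U\setminus\bigcup_n U_n$.

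The homotopy $H$ is then built in two stages on each $U_n$. On $U_n$ one first runs the contraction $G$ (reparametrised to the interval, say, $[0,\tfrac12]$), landing in the local transversal piece $Z_{k(n)}$; then on $[\tfrac12,1]$ one runs the leafwise isotopy realising the holonomy transformation $h_{k(n)}$, i.e.\ one slides each point of $Z_{k(n)}$ along the chosen plaque chain to its image in $T$. Concatenating and checking continuity at $t=\tfrac12$ (where both pieces agree by construction) gives a tangential homotopy $H_n\colon U_n\times\R\to\FF$ with $H_n(\cdot,0)=\mathrm{incl}$ and $H_n(U_n\times\{1\})\subset T$. Since the $U_n$ are disjoint open sets, the $H_n$ assemble to a single tangential homotopy $H\colon\bigcup_n U_n\times\R\to\FF$ with $H(\bigcup_n U_n\times\{1\})\subset T$, as required.

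It remains to see that $F$ is a closed, null transverse set. Closedness is immediate from $F=U\setminus\bigcup_n U_n$ once the $U_n$ are taken open. That $F$ is transverse follows because $F\subset Z'$ for a suitable transverse set: indeed by construction $F$ is contained in $G(U\times\{1\})$ up to the way we shrank the pieces, hence meets each leaf in a countable set; more carefully, one arranges the disjointification so that $F$ sits inside a countable union of boundaries of local transversals, which is again transverse. Finally $\LB(F)=0$: the transverse invariant measure \LB, extended to transverse sets, is "absolutely continuous" in the sense that a transverse set which is covered (via holonomy) by pieces of a fixed finite-measure transversal and which has empty interior in the leaf-space direction must be null — concretely, $G(\cdot,1)$ maps $F$ onto a transverse subset of $\overline Z\setminus\bigcup_n Z_{k(n)}$, and by regularity of \LB\ and the freedom in shrinking we can make the \LB-measure of this leftover arbitrarily small, hence zero for the intersection $F$ itself.

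The main obstacle I expect is the \textbf{measure-zero claim for $F$}: one has to perform the disjointification \emph{and} the choice of local transversal pieces $Z_k$ compatibly, so that the "lost" set $F$ is simultaneously transverse and of \LB-measure zero. This is where finiteness of \LB\ on compact sets, regularity of \LB, and the regularity/local compactness of the foliated atlas all get used, together with Connes' extension of \LB\ to transverse sets and its invariance under leaf-preserving measurable maps (so that $\LB(F)$ can be computed after pushing $F$ through $G(\cdot,1)$ and the holonomy maps $h_k$). The construction of $H$ itself, and the verification that it is a genuine foliated map continuous in all variables, is routine given a regular atlas but must be done with the plaque-chain description of leafwise paths so that "length" is controlled — though length does not enter this lemma, the same construction will be reused later with a length bound.
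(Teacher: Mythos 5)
Your strategy is the same as the paper's: contract $U$ by a fixed $\FF$-contraction $G$, use Lemma~\ref{l:vogt0} to see that $Z=G(U\times\{1\})$ is a countable union of local transversals, cover $Z$ by countably many transversal pieces each of which is carried into the complete transversal $T$ by a holonomy map, disjointify, pull back through $G(\cdot,1)$ to get the open sets $U_n$, and obtain $H$ by concatenating $G$ with the deformations induced by those holonomy maps (the paper's operation $H*G$ from Definition~\ref{d:operation homotopy}). All of that is fine and matches the paper.

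The genuine gap is exactly where you predicted it: the claim $\LB(F)=0$. Two of the justifications you offer do not work. First, the assertion that a transverse set which is ``covered by pieces of a finite-measure transversal and has empty interior in the leaf-space direction must be null'' is false for a measure that is merely regular and finite on compact sets: the boundary of a local transversal can perfectly well carry positive invariant mass (e.g.\ when the measure is supported on a Cantor set), so ``$F$ sits over a countable union of boundaries'' gives nothing by itself. Second, ``by the freedom in shrinking we can make the leftover arbitrarily small, hence zero'' is a non sequitur: for each fixed choice of pieces the leftover is a fixed set of fixed measure, and the lemma requires one partition whose exceptional set is exactly null (leftovers of arbitrarily small positive measure would only reprove vanishing of $\Cat(\FF,\LB)$, not this statement). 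The missing step, which is how the paper closes this hole, is to choose the covering transversals $S_i$ of $Z$ so that $\LB(\partial S_i)=0$ for every $i$: inside a chart one shrinks $S_i$ through a one-parameter nested family of open sets with pairwise disjoint boundaries, and finiteness of $\LB$ on a compact neighbourhood forces all but countably many of these boundaries to be $\LB$-null, so a shrinking with null boundary exists. With that choice, the recursive disjointification $T_1=S_1$, $T_n=\intr\bigl(S_n\setminus\bigcup_{i<n}S_i\bigr)$ leaves a remainder $K\subset\bigcup_i\partial S_i$ of measure exactly zero, and setting $U_n=G(\cdot,1)^{-1}(T_n)$ and $F=G(\cdot,1)^{-1}(K)$ yields the desired partition. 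Inserting this choice-of-null-boundaries argument is what your proposal still needs; the rest (continuity of the concatenated homotopy, closedness of $F$) is routine, as you say.
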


\begin{proof}
Let $G$ be a tangential contraction of $U$. By Lemma~\ref{l:vogt0}, $H(U\times\{1\})=S$ is a countable union of transversals. Therefore there exist a countable covering of $S$, $S=\bigcup_i S_i$, by transversals such that, for each $i\in\N$, there exists a holonomy map $h_i$ with domain $S_i$  and image contained in $T$ (since $T$ is a complete transversal). Each $h_i$ induce a tangential deformation $H^i:S_i\times \R\to\FF$ such that $H^i(S_i\times\{1\})=h_i(S_i)$. By Lemma~\ref{l:measure finite in compact sets}, we can suppose that $\LB(\partial S_i)=0$ for $i\in\N$.

Now, take $T_1=S_1$ and define recursively $T_n=\intr(V_n\setminus\bigcup_{i=1}^{n-1}S_i)$ and $K=U\setminus\bigcup_n T_n$. Of course, $K$ is closed in $S$ and it is null transverse since $K\subset\bigcup_i\partial S_i$. Finally, we obtain the partition by taking $U_n=H(-,1)^{-1}(T_n)$  and $F=H(-,0)^{-1}(K)$. The tangential contraction $H$ for $\bigcup_n U_n$ is defined as follows:
$$
H(x,t)=
\begin{cases}
G(x,2t) & \text{if $t\leq \frac{1}{2}$}\\
H^i(G(x,1),2t-1) & \text{if $t\geq\frac{1}{2}$ and $x\in U_i$\;.}\qed
\end{cases}
$$
\renewcommand{\qed}{}
\end{proof}

The way to define deformations used in the above prove will be useful in other sections of this work. This idea is specified by defining the following homotopy operation.

\begin{defn}\label{d:operation homotopy}
Let $H:X\times \R\to Y$ and $G:Z\times \R\to Y$ be tangential deformations (hence it is supposed that $X,Z\subset Y$, and $H(-,0)$ and $G(-,0)$ are inclusion maps) such that $H(X\times\{1\})\subset Z$. Then let $H*G$ be the the tangential deformation of $X$ defined by:
$$
H*G(x,t)=
\begin{cases}
H(x,2t) & \text{if $t\leq \frac{1}{2}$}\\
G(H(x,1),2t-1) & \text{if $t\geq\frac{1}{2}$\;.}
\end{cases}
$$
\end{defn}

Classical LS category is upper bounded by the dimension of the given space plus one. The classical result due to W. Singhof and E. Vogt \cite{Vogt-Singhof} states that, for any $C^2$ lamination \FF\ on a compact manifold, $\Cat(\FF)\leq\dim\FF+1$. We shall give an adaptation of these results to \LB-category. The vanishing result of \LB-category in minimal laminations will be a corollary of that.

\begin{prop}\label{p:homotopyholonomy}
Let $U$ be a tangentially categorical set, $T$ a complete
topological transversal, $H$ a tangential contraction for $U$ and
$\varepsilon>0$. Suppose that $\LB$ is finite in compact sets. Then there exists
two open sets $V,W$ covering $U$ and
tangential contractions $H^V,H^W$ of each $V$ and $W$ respectively such that
$$\LB(H^V(V\times\{1\}))+\LB(H^W(W\times\{1\}))\leq \LB(T) +
\varepsilon\;$$
\end{prop}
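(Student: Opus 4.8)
The strategy is to carve $U$ into $V:=U\setminus F$ and a thin neighbourhood $W$ of $F$, where $\{F,U_n\}_{n\in\N}$ is the partition produced by Lemma~\ref{l:disgregation}: on $V$ the disgregation deformation already lands in $T$, while the contribution of $W$ will be pushed below $\varepsilon$ by an outer regularity argument, using that $\LB(H(F\times\{1\}))=0$.

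First I would apply Lemma~\ref{l:disgregation} to $U$ and to the complete transversal $T$. This produces a partition $\{F,U_n\}_{n\in\N}$ of $U$ with $F$ closed and null transverse, each $U_n$ open, the $U_n$ pairwise disjoint, and a tangential homotopy $\widetilde H\colon\bigcup_n U_n\times\R\to\FF$ which, by its construction in the proof of Lemma~\ref{l:disgregation}, is a deformation (i.e.\ $\widetilde H(-,0)$ is the inclusion), whose restriction to each $U_n$ is a tangential contraction of $U_n$, and with $\widetilde H(\bigcup_n U_n\times\{1\})\subset T$. Put $V:=\bigcup_n U_n=U\setminus F$ and $H^V:=\widetilde H$. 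Because the $U_n$ are pairwise disjoint open sets, each is clopen in $V$, so every plaque of $\FF_V$, being connected, lies in a single $U_n$; hence every leaf of $\FF_V$ coincides with a leaf of some $\FF_{U_n}$. Since $\widetilde H(-,1)$ is constant on the leaves of each $\FF_{U_n}$, it is constant on the leaves of $\FF_V$, so $H^V$ is a tangential contraction of $V$. By Lemma~\ref{l:vogt0} the set $H^V(V\times\{1\})$ is a transverse set, and it is contained in $T$, so $\LB(H^V(V\times\{1\}))\le\LB(T)$.

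Next I would deal with $W$. Set $N:=H(F\times\{1\})\subset S:=H(U\times\{1\})$; by Lemma~\ref{l:vogt0} the set $S$ is a countable union of local transversals, so $N$ is transverse. The key point is $\LB(N)=0$: since $H$ is a foliated map, $H(-,1)$ keeps each point in its own leaf, so by the invariance of $\LB$ under leaf-preserving measurable maps \cite{Connes} the map $H(-,1)$ preserves $\LB$ on any transverse set on which it is injective; writing the transverse set $F$ as a countable union of such pieces and using $\LB(F)=0$ yields $\LB(N)=0$. By regularity of $\LB$ (valid since $\LB$ is finite on compact sets, as already used in the proof of Lemma~\ref{l:disgregation}) I would pick a relatively open set $O\subset S$ with $N\subset O$ and $\LB(O)<\varepsilon$, and then set $W:=H(-,1)^{-1}(O)$ and $H^W:=H|_{W\times\R}$. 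Then $W$ is open in $U$ and contains $F$, $H^W$ is a tangential contraction of $W$ (the restriction of a contraction of $U$), and $H^W(W\times\{1\})\subset O$, so $\LB(H^W(W\times\{1\}))<\varepsilon$. Finally $V\cup W=(U\setminus F)\cup W=U$ since $F\subset W$, and therefore
$$
\LB(H^V(V\times\{1\}))+\LB(H^W(W\times\{1\}))\le\LB(T)+\varepsilon\;.
$$

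The routine points — that $\widetilde H$ is a genuine deformation with $\widetilde H(-,0)$ the inclusion and with $\widetilde H|_{U_n}$ a tangential contraction, that relatively open subsets of $S$ are transverse sets, and that $H(-,1)^{-1}$ of a relatively open subset of $S$ is open in $U$ — are immediate from the definitions and from the construction in Lemma~\ref{l:disgregation}. The step I expect to require the most care is the smallness of the $W$-contribution: it combines the leaf-preserving invariance of $\LB$ (to obtain $\LB(N)=0$) with outer regularity of $\LB$ on the transversal structure (to fatten $N$ to $O$), and one has to make sure that $O$ can be taken relatively open in $S$, so that $W$ is open, while remaining transverse, so that $\LB(O)$ is defined; this is handled by fattening $N$ separately inside each of the countably many transversals whose union is $S$.
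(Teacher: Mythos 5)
Your proposal is correct and follows essentially the same route as the paper's proof: apply Lemma~\ref{l:disgregation} to split $U$ into an open part $V$ that contracts into $T$ plus a closed null-transverse remainder $F$, observe that $\LB(H(F\times\{1\}))=0$, and use outer regularity (from finiteness on compact sets) to fatten $H(F\times\{1\})$ to a relatively open subset of $H(U\times\{1\})$ of measure $<\varepsilon$, whose preimage under $H(-,1)$ is the second open set $W$. The only difference is cosmetic: you justify $\LB(H(F\times\{1\}))=0$ via leaf-preserving invariance of $\LB$, whereas in the construction of Lemma~\ref{l:disgregation} this image already sits inside the null set $\bigcup_i\partial S_i$; both are fine.
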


\begin{proof}
Observe that $T_H=H(U\times\{1\})$ is a transverse set (by Lemma~\ref{l:vogt0}); in
fact, for any $x\in U$, there exists an $\FF_U$-saturated open neighborhood $U_x\subset U$ such that $H(U_x\times\{1\})$ is a transversal associated to some foliated chart. Hence, since $T$ is a
complete transversal we can consider that each $H(U_x\times\{1\})$ is inserted by a holonomy map into $T$. By using Lemma~\ref{l:disgregation}, there exists a partition $\{F,V\}$ of $U$ such that $F$ is closed in $U$ and null transverse, and $V$ is an open set so that there exist a tangential contraction $H^V:V\times\R\to \FF$ satisfying $H(V\times\{1\})\subset T$. Now, observe that $\LB(H(F\times\{1\})=0$. Since $\LB$ is externally regular on \sg-compact sets (by the finiteness on compact sets), there exists an open neighborhood $S$ of $H(F\times\{1\})$ in $T_H$ such that $\LB(S)<\varepsilon$. Let $W=H(-,1)^{-1}(S)$. Clearly, these $V$ and $W$ satisfy the stated conditions.
\end{proof}

\begin{thm}
Let $(M,\FF,\LB)$ be a $C^2$ foliated compact manifold with a transverse
invariant measure that is finite in compact sets. Let $T$ be a complete transversal of $\FF$.
Then
$$\Cat(\FF,\LB)\leq (\dm\FF+1)\cdot\LB(T)\;.$$
\end{thm}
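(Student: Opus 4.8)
The plan is to combine the classical Singhof--Vogt bound $\Cat(\FF)\le\dm\FF+1$ with the disgregation/holonomy machinery from Proposition~\ref{p:homotopyholonomy}, applied iteratively. First I would invoke the Singhof--Vogt theorem to obtain a covering $\{V_1,\dots,V_k\}$ of $M$ by \FF-categorical open sets with $k\le\dm\FF+1$, together with a tangential contraction $H_j$ of each $V_j$. Since $M$ is compact and $\LB$ is finite on compact sets, each $\LB(T)$ is finite, so the numbers we manipulate are all finite; this is where compactness is really used.

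The core step is to show that $\tau_\LB(V_j)\le\LB(T)$ for each $j$, up to an arbitrarily small error. Fix $\varepsilon>0$. Applying Proposition~\ref{p:homotopyholonomy} to $V_j$ with the complete transversal $T$ and tolerance $\varepsilon/k$ gives a two-set subcover $\{V_j',W_j'\}$ of $V_j$ with contractions whose image measures sum to at most $\LB(T)+\varepsilon/k$. I would then take the union over $j$ of these pairs, obtaining a covering of $M$ by at most $2k$ \FF-categorical open sets. Summing the $\tau_\LB$ contributions over this covering yields
$$
\Cat(\FF,\LB)\le\sum_{j=1}^k\bigl(\LB(T)+\tfrac{\varepsilon}{k}\bigr)=k\cdot\LB(T)+\varepsilon\le(\dm\FF+1)\cdot\LB(T)+\varepsilon\;.
$$
Letting $\varepsilon\to0$ gives the result. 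One subtlety: Proposition~\ref{p:homotopyholonomy} bounds a \emph{sum} of two image-measures by $\LB(T)+\varepsilon$, not each one separately, so the natural bookkeeping is to treat the pair $\{V_j',W_j'\}$ as jointly contributing at most $\LB(T)+\varepsilon/k$ to $\sum_{U\in\UU}\tau_\LB(U)$, which is exactly what the infimum in the definition of $\Cat(\FF,\LB)$ allows.

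The main obstacle I anticipate is making the iteration of Proposition~\ref{p:homotopyholonomy} genuinely compatible with the definition of $\Cat(\FF,\LB)$ as an infimum over categorical coverings: one must check that the open sets $V_j',W_j'$ produced really are \FF-categorical (they are, since they come with explicit contractions) and that the holonomy maps inserting the final transversals into $T$ do not cause overcounting when leaves are visited by several $V_j'$. Because $\tau_\LB$ is defined as an infimum over contractions and $\Cat(\FF,\LB)$ as an infimum over coverings, overcounting only makes the estimate weaker, so it is harmless here; the only thing that must be verified carefully is that each piece admits \emph{some} contraction landing inside $T$ with the claimed measure, which is precisely the content of Proposition~\ref{p:homotopyholonomy}. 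A secondary technical point is that Singhof--Vogt is stated for foliations of compact manifolds in the $C^2$ setting, which matches the hypotheses of the theorem exactly, so no extension of that result is needed.
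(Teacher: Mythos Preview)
Your proposal is correct and follows essentially the same route as the paper: invoke Singhof--Vogt to get $\le\dm\FF+1$ categorical open sets, apply Proposition~\ref{p:homotopyholonomy} to each, sum, and let $\varepsilon\to0$. You are simply more explicit than the paper about the fact that the proposition outputs a \emph{pair} $\{V_j',W_j'\}$ whose joint contribution is bounded by $\LB(T)+\varepsilon/k$; the paper absorbs this into a one-line application of the proposition.
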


\begin{proof}
Since $\Cat(\FF)\leq\dm\FF+1$, we can use Proposition~\ref{p:homotopyholonomy} with
$\dm\FF +1$ tangentially categorical open sets covering $M$. Then
\[
  \Cat(\FF,\LB)\leq (\dm\FF+1)\cdot(\LB(T)+\varepsilon)
  \]
for all $\varepsilon>0$.
\end{proof}

\begin{cor}\label{c:minimalfoliation}
Let $(X,\FF)$ a minimal foliated manifold and let \LB\ be a
transverse invariant measure of \FF\ that is regular without atoms. Then $\Cat(\FF,\LB)=0$.
\end{cor}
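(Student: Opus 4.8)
The plan is to derive the Corollary from the preceding Theorem: that result gives $\Cat(\FF,\LB)\le(\dm\FF+1)\cdot\LB(T)$ for every complete transversal $T$ (the manifold being compact and $\LB$ being finite on compact sets, by the regularity hypothesis). Hence it suffices to produce, for every $\varepsilon>0$, a complete transversal $T_{\varepsilon}$ with $\LB(T_{\varepsilon})<\varepsilon$; then $\Cat(\FF,\LB)\le(\dm\FF+1)\varepsilon$ for all $\varepsilon>0$, forcing $\Cat(\FF,\LB)=0$.

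To construct $T_{\varepsilon}$, fix one foliated chart $(U,\varphi)$ with $\varphi\colon U\to B^{n}\times S$ and $U\ne\emptyset$, and let $T_{0}=\varphi^{-1}(\{\ast\}\times S)$ be its associated transversal. The key observation is that every nonempty open subset $O\subset T_{0}$ is a complete transversal. Indeed, $O$ is a transversal, being open inside the transversal $T_{0}$; and $O$ corresponds under $\varphi$ to a nonempty open subset $\widetilde O\subset U\subset X$, which every leaf meets since $\FF$ is minimal and so every leaf is dense; a leaf meeting $\widetilde O$ contains the whole plaque through the intersection point, and that plaque contains the corresponding point of $O$, so the leaf meets $O$. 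Thus $O$ meets every leaf.

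It remains to find such an $O$ of $\LB$-measure $<\varepsilon$. Pick any point $t_{0}\in T_{0}$. Since the charts may be taken locally compact, $T_{0}$ is locally compact metrizable, so for all large $k$ the metric ball $B(t_{0},1/k)$ has compact closure and hence finite $\LB$-measure; as $\bigcap_{k}B(t_{0},1/k)=\{t_{0}\}$ and $\LB$ has no atoms, continuity of $\LB$ along this decreasing family yields $\LB(B(t_{0},1/k))\to\LB(\{t_{0}\})=0$. Choosing $k$ with $\LB(B(t_{0},1/k))<\varepsilon$ and setting $T_{\varepsilon}=B(t_{0},1/k)$ — a nonempty open subset of $T_{0}$, hence a complete transversal by the previous paragraph — completes the argument.

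The only point requiring care is the compatibility of the two demands on $T_{\varepsilon}$: minimality is precisely what guarantees that an arbitrarily thin open slice of a single chart's transversal still meets every leaf, so completeness costs nothing once we stay nonempty and open inside $T_{0}$; while the absence of atoms is precisely what allows such a slice to be shrunk to small measure. Everything else — the finiteness of $\LB$ on compact sets, the fact that $O$ qualifies as a transversal, and the applicability of the Theorem — is routine.
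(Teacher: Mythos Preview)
Your proof is correct and follows exactly the approach the paper intends: the corollary is stated without proof, but the sentence immediately following it (``in general, in any lamination where there exists complete transversals with arbitrarily small measure'') makes clear that the intended argument is precisely yours --- use minimality to guarantee that any nonempty open piece of a local transversal is still complete, and use the absence of atoms (plus regularity) to shrink such a piece to arbitrarily small measure, then apply the dimensional upper bound. One cosmetic point: the preceding theorem is stated for $C^2$ foliations on compact manifolds, and the corollary silently inherits these hypotheses; you implicitly assume compactness as well, which is fine, but you may want to flag that the $C^2$ and compactness assumptions carry over.
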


A similar resut can be stated for ergodic laminations and, in general, in any lamination where there exists complete transversals with arbitrarily small measure. Remark that the regularity condition in the measure is essential in the previous corollary. By using suspensions of the diffeomorphisms given in \cite{Kodama-Matsumoto} we can obtain minimal flows on the torus with positive \LB-category. 

\section{Secondary \LB-category}

In this section, we introduce a refinement of the definition of the \LB-category. The \LB-category is zero in many interesting cases, as follows from the dimensional upper bound (see Corollary~\ref{c:minimalfoliation}). Now, we focus on the rate of convergence to zero of the expression that defines the \LB-category; this rate will be called the {\em secondary \LB-category}.

Let $(X,\FF)$ be a lamination on a compact Polish space. For a fixed finite regular atlas \UU\ of \FF, recall that a {\em chain of plaques\/} of $\UU$ is a finite sequence of plaques of charts in $\UU$ such that each plaque meets the next one. The {\em length\/} of a chain of plaques is its number of elements plus one. For $x,y\in M$ in the same leaf, let $d_{\UU}(x,y)$ be the minimum length of a chain of plaques of $\UU$ such that the first and last ones contain $x$ and $y$, respectively. If $x$ and $y$ are points in different leaves, we set $d_{\UU}(x,y)=\infty$. This defines a function $d_{\UU}:M\times M\to\N\cup\{\infty\}$ that is symmetric and satisfies the triangle inequality (it is called a {\em coarse metric\/} in \cite{Hurder}). Moreover $d_{\UU}(x,y)<\infty$ if and only if $x$ and $y$ are in the same leaf, and $d_{\UU}(x,y)=1$ if and only if there is some plaque containing $x$ and $y$. For $x\in M$ and $X\subset M$, let $d_{\UU}(x,X)=\min_{y\in X}d_{\UU}(x,y)$. The {\em length\/} relative to \UU\ of a path $\sigma$ on the leaves, denoted by $\length(\sigma)$ (or more explicitly, $\length_{\UU}(\sigma)$), is the minimum length of a chain of plaques in \UU\ covering $\sigma$. When $\FF$ is $C^1$, we can fix a Riemannian metric $g$ on the leaves that varies continuously in the transverse direction. Then, if $\sigma$ is Lipschitz, its {\em length\/} $\length(\sigma)$ can be also given by $g$; in this case, the notation $\length_g(\sigma)$ can be also used. From the compactness of $X$, it follows that there is some $A\ge1$ and $B\ge0$ such that
  \[
    \frac{1}{A}\,\length_g(\sigma)-B\le\length_{\UU}(\sigma)\le A\,\length_g(\sigma)+B
  \]
for all $\sigma$. Because of this, both definitions of length will work equally well for our purposes. Then we preferably use the length given by a regular atlas since it is defined with more generality. The {\em length\/} of a tangential deformation is the supremum of the lengths of the induced leafwise paths; if the length is defined by a leafwise Riemannian metric, we assume that the deformation is Lipschitz.

Let $\LB$ be a transverse invariant measure of $\FF$ such that $\Cat(\FF,\LB)=0$. In the rest of this section, we assume that $\Lambda$ is regular and finite on compact sets.

\begin{defn}
Let $\Cat(\FF,\LB,r)$ be defined like the \LB-category by using only tangential deformations of length $\le r$. If we use a finite covering $\UU$ to define the length, then the more explicit notation $\Cat(\FF,\LB,r,\UU)$ can be used. If we use a leafwise Riemannian metric $g$ to define the length, then we consider only Lipschitz deformations in the definition of $\Cat(\FF,\LB,r)$, and the more explicit notation $\Cat(\FF,\LB,r,g)$ can be used.
\end{defn}

Observe that $\Cat(\FF,\LB,n,\UU)$ is a non-increasing sequence of positive numbers that converges to $\Cat(\FF,\LB)=0$.

\begin{rem}
It is also true for $\Cat(\FF,\LB,n)$ that a null transverse set is unessential for its computation. Any \sg-compact null transverse set can be covered by tangentially contractible open sets contained in charts such that each chart has a contraction of $\length_{\UU}\le 1$, and the sum of the measures of the final deformations are arbitrarily small (see Proposition~\ref{p:nullset}).
\end{rem}

\begin{defn}[Growth types]
Let $\mathcal{I}$ be the set of non-negative non-decreasing sequences.
$$
  \mathcal{I}=\{\,g:\N\to[0,\infty)\mid g(n)\leq g(n+1)\ \forall n\in\N\,\}\;.
$$
 A preorder ``$\preceq$'' on $\mathcal{I}$ is defined by setting $g\preceq h$ if $\exists B\in\N$ and $\exists A>0$ so that $g(n)\leq A\, h(Bn)$ $\forall n\in\N$.
 This preorder $\preceq$ induces an equivalence relation ``$\simeq$'' in $\mathcal{I}$ defined by $g\simeq h$ if $g\preceq h\preceq g$. The elements of the quotient set $\mathcal{E}=\mathcal{I}/\simeq$ are called {\em growth types of non-decreasing sequences\/} and has the partial order ``$\leq$'' induced by ``$\preceq$''. The equivalence class of $g$ is denoted by $[g]$.
\end{defn}

Notice that the sequence $1/\Cat(\FF,\LB,n,\UU)$ is non-decreasing.

\begin{defn}
The growth type of  $1/\Cat(\FF,\LB,n,\UU)$ is called the {\em secondary \LB-category\/} of \FF, and is denoted by $\Cat_{\second}(\FF,\LB,\UU)$.
\end{defn}

\begin{prop}\label{p:Cat_II is independent of UU}
The secondary \LB-category is independent on the choice of $\UU$.
\end{prop}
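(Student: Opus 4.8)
The plan is to show that the two coarse metrics $d_{\UU}$ and $d_{\VV}$ coming from two finite regular atlases $\UU,\VV$ of $\FF$ are coarsely equivalent, and then transport this comparison to the sequences $\Cat(\FF,\LB,n,\UU)$ and $\Cat(\FF,\LB,n,\VV)$. Concretely, I would first establish that there exist constants $A\ge 1$ and $B\ge 0$ such that
\[
  \frac{1}{A}\,\length_{\VV}(\sigma)-B\le\length_{\UU}(\sigma)\le A\,\length_{\VV}(\sigma)+B
\]
for every leafwise path $\sigma$. This is the analogue of the $\length_g$ versus $\length_{\UU}$ comparison already quoted in the text, and it follows from compactness of $X$: each plaque of $\UU$ is covered by finitely many plaques of $\VV$ (and vice versa) by a Lebesgue-number argument, so a chain of $k$ plaques in one atlas can be refined to a chain of at most $\sim Ak$ plaques in the other. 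Since both atlases are finite and regular one gets uniform constants. (One can also absorb the additive constant into a multiplicative one after a fixed shift, so effectively $\length_{\UU}(\sigma)\le A'\,\length_{\VV}(\sigma)$ for $\length_{\VV}(\sigma)$ large, which is all the growth-type preorder needs.)

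Next I would use this to compare the two families of homotopies. A tangential deformation $H$ of length $\le r$ measured with $\UU$ has all its leafwise tracks of $\UU$-length $\le r$, hence of $\VV$-length $\le Ar+B$. Therefore $H$ is also admissible in the definition of $\Cat(\FF,\LB,Ar+B,\VV)$, which gives the inequality $\Cat(\FF,\LB,Ar+B,\VV)\le\Cat(\FF,\LB,r,\UU)$ for all $r$; note that the infima defining the $\LB$-categories run over the \emph{same} collections of final deformation measures once the length constraint is met, so the inequality is immediate. Taking reciprocals, and writing $n\mapsto An+B\le (A+B)n$ for $n\ge1$, yields
\[
  \frac{1}{\Cat(\FF,\LB,n,\UU)}\le\frac{1}{\Cat(\FF,\LB,(A+B)n,\VV)}\;,
\]
which says precisely that $1/\Cat(\FF,\LB,n,\UU)\preceq 1/\Cat(\FF,\LB,n,\VV)$ in the preorder of $\mathcal{I}$. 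Swapping the roles of $\UU$ and $\VV$ gives the reverse inequality, so the two sequences are $\simeq$-equivalent and define the same growth type $\Cat_{\second}(\FF,\LB,\UU)=\Cat_{\second}(\FF,\LB,\VV)$.

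The main obstacle is the length-comparison step, and in particular making sure the additive constant $B$ is genuinely harmless: a deformation of $\UU$-length $1$ (i.e. staying inside a single chart of $\UU$) need not stay inside a single chart of $\VV$, so without regularity one could a priori fail to bound its $\VV$-length at all. Regularity of the atlases — specifically property (b), which forces a plaque meeting another chart to sit inside a single plaque there — is what guarantees that a chain of plaques in one atlas is refined by a \emph{bounded-multiplicity} chain in the other, giving the affine comparison rather than something worse. I would also note the mild subtlety that the length of a deformation is a supremum over all its leafwise paths, so the comparison must be uniform in the path, which is exactly what the compactness/Lebesgue-number argument provides. Once the affine comparison of lengths is in hand, everything else is a formal manipulation of the preorder $\preceq$, and one should remark in passing that the same argument shows independence from the choice of leafwise metric $g$ when $\FF$ is $C^1$, using the already-stated comparison between $\length_g$ and $\length_{\UU}$.
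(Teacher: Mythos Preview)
Your proposal is correct and follows essentially the same approach as the paper: the paper also uses compactness to find a constant $C\ge1$ such that each chart of $\UU$ is covered by at most $C$ charts of $\VV$, deduces $\length_{\VV}(\sigma)\le C\,\length_{\UU}(\sigma)$, and hence $\Cat(\FF,\LB,Cn,\VV)\le\Cat(\FF,\LB,n,\UU)$, then swaps roles. The only cosmetic difference is that the paper obtains a purely multiplicative bound directly (no additive $B$), whereas you carry an additive constant and then absorb it; since the plaque-length is a positive integer this is immaterial.
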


\begin{proof}
Let $\VV$ be another finite regular atlas of $\FF$. There exists some integer $C\ge1$ such that each chart of $\UU$ is covered by at most $C$ charts of $\VV$. Hence any chain of charts of $\UU$ of length $n\in\N$ can be covered by a chain of charts of $\VV$ of length $\leq Cn$, giving $\length_{\UU}(\sigma)\le C\,\length_{\VV}(\sigma)$ for any leafwise path $\sigma$. Finally, any tangential homotopy follows, locally, a chain of charts \cite{Vogt-Singhof}. Hence any tangential homotopy of $\length_{\UU}\le n$ is of $\length_{\VV}\leq Cn$, obtaining $\Cat(\FF,\LB,Cn,\VV)\leq\Cat(\FF,\LB,n,\UU)$ for all $n$, and therefore $\Cat_{\second}(\FF,\LB,\UU)\leq\Cat_{\second}(\FF,\LB,\VV)$. The reverse inequality follows with the same argument.
\end{proof}

According to Proposition~\ref{p:Cat_II is independent of UU}, the secondary \LB-category is denoted by $\Cat_{\second}(\FF,\LB)$ from now on. Since the ambient space is compact, the distorsions in the leafwise length of homotopies given by a tangential homotopy equivalence are uniformly bounded. Therefore the growth types involving the secondary \LB-categories are the same. By using the same argument to show that the primary \LB-category is invariant by measure preserving tangential homotopy equivalences \cite{Menino1}, we obtain the following.

\begin{prop}\cite{Menino}
The secondary \LB-category is invariant by measure preserving tangential homotopy equivalences.
\end{prop}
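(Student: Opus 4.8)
The plan is to reduce the statement to the already-established invariance of the primary $\LB$-category under measure-preserving tangential homotopy equivalences \cite{Menino1}, by tracking how such an equivalence distorts leafwise lengths. Let $f\colon(X,\FF,\LB)\to(X',\FF',\LB')$ be a measure-preserving tangential homotopy equivalence with homotopy inverse $f'$, and fix finite regular atlases $\UU$ of $\FF$ and $\VV$ of $\FF'$ (legitimate by Proposition~\ref{p:Cat_II is independent of UU}). First I would record the elementary metric fact that, because $X$ and $X'$ are compact and $f,f'$ are foliated maps covered by finitely many charts, there are constants $A\ge1$, $B\ge0$ with
\[
  \length_{\VV}(f\circ\sigma)\le A\,\length_{\UU}(\sigma)+B,\qquad
  \length_{\UU}(f'\circ\sigma')\le A\,\length_{\VV}(\sigma')+B
\]
for every leafwise path $\sigma$ in $X$ and $\sigma'$ in $X'$; absorbing $B$ into $A$ at the cost of enlarging $A$ (using $\length\ge1$ always), one gets a purely multiplicative bound $\length_{\VV}(f\circ\sigma)\le A'\,\length_{\UU}(\sigma)$ and symmetrically.

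The core step is then: starting from a covering of $X'$ by $\FF'$-categorical open sets $\{V_j\}$ each admitting a tangential contraction $H_j$ of length $\le r$ with $\sum_j\LB'(H_j(V_j\times\{1\}))$ close to $\Cat(\FF',\LB',r,\VV)$, I would pull everything back along $f$. The sets $U_j=f^{-1}(V_j)$ cover $X$; each becomes $\FF$-categorical by conjugating $H_j$ with $f$ and $f'$ and inserting the tangential deformation $K$ realizing $f'\circ f\simeq\id_X$, using the homotopy operation $*$ of Definition~\ref{d:operation homotopy} twice: the contraction of $U_j$ is (essentially) $K|_{U_j}*\big((f'\circ H_j\circ(f\times\id))\big)$ followed by a further contraction inside a chart. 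The leafwise length of this composite is bounded by $A'r$ up to an additive constant coming from $K$ and from the bounded-diameter final pieces; crucially, since $f$ is measure-preserving (on transverse sets, via \cite{Connes}) the measure of the final image of the new contraction equals $\LB'(H_j(V_j\times\{1\}))$ up to the contributions of the correction pieces, which can be made to lie in a null transverse set and hence absorbed by the Remark following the definition of $\Cat(\FF,\LB,n)$. This yields $\Cat(\FF,\LB,Cr)\le\Cat(\FF',\LB',r)$ for a fixed constant $C$ independent of $r$, hence $1/\Cat(\FF,\LB,Cn)\le 1/\Cat(\FF',\LB',n)$, which is exactly a $\preceq$-inequality between the sequences $1/\Cat(\FF,\LB,n)$ and $1/\Cat(\FF',\LB',n)$; the symmetric argument with $f'$ gives the reverse, so the growth types agree.

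The main obstacle I anticipate is bookkeeping the length of the concatenated deformation: the operation $*$ reparametrizes, so lengths of the pieces add, and one must check that the "correction" deformations coming from $K$ (the homotopy $f'f\simeq\id$) and from the passage between $f(H_j(\cdot,1))$ and an honest transversal have length bounded by a constant depending only on $f$, $\UU$, $\VV$ — not on $r$ — which is where compactness of $X$ and finiteness of the atlases are used. A secondary technical point is ensuring the null-transverse "leftover" sets generated when one cuts categorical open sets into pieces contained in single charts (Lemma~\ref{l:disgregation}) can be handled with contractions of length $\le1$, so they contribute nothing to the length estimate and negligibly to the measure; this is precisely the content of the Remark quoted above, so it may simply be cited. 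Everything else — measure preservation on the final transverse images, independence from the atlas — is either hypothesis or Proposition~\ref{p:Cat_II is independent of UU}. I would therefore present the argument as "run the proof of primary invariance from \cite{Menino1} keeping track of lengths, observing all distortions are uniformly bounded by compactness," and only spell out the length bookkeeping in detail.
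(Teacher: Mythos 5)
Your proposal follows essentially the same route as the paper, whose proof is precisely the observation that compactness makes the leafwise length distortions caused by a tangential homotopy equivalence uniformly bounded, so the argument for invariance of the primary $\LB$-category from \cite{Menino1} carries over with $n$ rescaled by a constant, giving equal growth types. One small slip: from $\Cat(\FF,\LB,Cr)\le\Cat(\FF',\LB',r)$ taking reciprocals gives $1/\Cat(\FF',\LB',n)\le 1/\Cat(\FF,\LB,Cn)$ (not the inequality you wrote), but this does not affect the conclusion since the symmetric argument with $f'$ supplies the other comparison.
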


Let $T$ be a complete transversal of $\FF$. Define $\Cat(\FF,\LB,n,\UU,T)$ in the same way as $\Cat(\FF,\LB,n,\UU)$ by taking only tangential contractions $H:U\times [0,1]\to\FF$ such that $H(U\times\{1\})\subset T$.

\begin{prop}\label{p:restricttotransversal}
 The growth types of
   \[
     \frac{1}{\Cat(\FF,\LB,n,\UU)}\quad\text{and}\quad\frac{1}{\Cat(\FF,\LB,n,\UU,T)}
   \]
 are equal.
\end{prop}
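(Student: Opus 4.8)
The plan is to show that restricting to contractions landing in a fixed complete transversal $T$ only costs a bounded multiplicative factor in length and an additive term, which is absorbed when we pass to growth types. One inequality is immediate: every tangential contraction with image in $T$ is in particular a tangential contraction, so $\Cat(\FF,\LB,n,\UU,T)\ge\Cat(\FF,\LB,n,\UU)$, whence $1/\Cat(\FF,\LB,n,\UU,T)\preceq 1/\Cat(\FF,\LB,n,\UU)$. The content is in the other direction, and the tool is Proposition~\ref{p:homotopyholonomy} (and the disgregation lemma behind it), but now tracking lengths.

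The key step is a ``bounded holonomy'' observation. Fix a finite regular atlas $\UU$. Because $X$ is compact and $T$ is a complete transversal, one can choose finitely many holonomy maps $h_1,\dots,h_k$ whose domains cover a complete transversal $T_0$ adapted to $\UU$ (e.g.\ a union of chart transversals) and whose images lie in $T$; each $h_j$ is a composition of at most some fixed number $L$ of elementary holonomy transformations $g_{ij}$ of the atlas, and so the associated tangential deformation $H^{j}:S_j\times\R\to\FF$ has $\length_{\UU}(H^{j})\le L+1$. Now given any covering $\UU'=\{U_\ell\}$ of $X$ by \FF-categorical open sets with contractions $G_\ell$ of length $\le n$, apply the disgregation procedure from Lemma~\ref{l:disgregation}: after discarding a closed null transverse set $F_\ell$ (unessential by the remark following the definition of $\Cat(\FF,\LB,n)$), $U_\ell\setminus F_\ell$ splits into open sets on each of which $G_\ell$ is followed, via the operation $*$ of Definition~\ref{d:operation homotopy}, by one of the $H^{j}$. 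The resulting contractions land in $T$ and have length $\le n+L+1\le (L+2)n$. Hence $\Cat(\FF,\LB,(L+2)n,\UU,T)\le\Cat(\FF,\LB,n,\UU)$ for all $n$, so $1/\Cat(\FF,\LB,n,\UU)\preceq 1/\Cat(\FF,\LB,n,\UU,T)$, and combined with the trivial inequality this gives equality of growth types.

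There is one technical point to handle carefully before the count of lengths goes through: the image $T_H=G_\ell(U_\ell\times\{1\})$ is only a \emph{countable} union of local transversals, not a finite one, so a priori no single uniform bound $L$ applies. This is where one must use the finiteness of $\LB$ on compact sets together with external regularity, exactly as in the proof of Proposition~\ref{p:homotopyholonomy}: one covers the (saturated) part of $U_\ell$ whose final image requires ``long'' holonomy by an open set $W$ with $\LB(G_\ell(W\times\{1\}))$ as small as desired, contracts $W$ inside its own charts with length $1$ (its final measure being negligible), and only needs the uniform bound $L$ on the complementary open set $V$, where finitely many of the $h_j$ suffice. Thus the main obstacle is bookkeeping: simultaneously controlling the number of open sets in the covering, the leafwise lengths of the composed deformations, and the error in measure, so that summing over the (finitely many) $U_\ell$ still yields a covering witnessing $\Cat(\FF,\LB,(L+2)n,\UU,T)$ no larger than $\Cat(\FF,\LB,n,\UU)$ up to an $\varepsilon$ that may be sent to $0$. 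Once that is arranged, passing to the preorder $\preceq$ on $\mathcal I$ makes the additive constant $L+1$ and the multiplicative constant disappear, and the proof concludes.
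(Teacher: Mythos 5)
Your proposal is correct and follows essentially the same route as the paper's proof: disgregate each categorical open set as in Lemma~\ref{l:disgregation}/Proposition~\ref{p:homotopyholonomy}, use compactness of the ambient space and completeness of $T$ to get a uniform bound on the length of the extra holonomy deformations, concatenate via the operation of Definition~\ref{d:operation homotopy}, control the final measure by invariance, handle the null-transverse remainder with small-measure, length-one contractions, and let the growth-type preorder absorb the constants. The only comment is that your ``long holonomy'' worry is not really needed: the compactness argument already bounds the coarse distance from every point of $X$ to $T$ uniformly, so the exceptional set is just the null-transverse set of boundaries produced by the disgregation, whose final image has $\LB$-measure zero --- which is precisely what makes the external-regularity step you invoke legitimate.
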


\begin{proof}
By using the argument of Proposition~\ref{p:homotopyholonomy}, any tangentially contractible open set $U$ has a partition $V_1,\dots,V_k,F$, where the sets $V_i$ are open, $\bigcup_i V_i$ contracts to a transversal contained in $T$, and $F$ is a null-transverse set. By compactness, since $T$ is complete, the length of a deformation of $\bigcup_i V_i$ needed to reach $T$ is bounded by a constant independent of $U$: the ambient space can be covered by a sequence of saturations of $T$ in the charts of a regular foliated atlas; by compactness, there is a finite number of them, and this number is the desired constant. By invariance, the measure of the final step of this deformation is controlled by the measure of a contraction of $U$. On the other hand the contribution of $F$ can be made as small as desired by using contractions of length $1$ (in the charts of the atlas).
\end{proof}

\section{Secondary category and growth of the holonomy pseudogroup}

Now, we give a relation between secondary \LB-category and the growth type of the holonomy group in the case of free suspensions. This result shows the deep relation between the concepts of holonomy and tangential homotopy. The growth rate of the group give us a clear lower bound for the secondary \LB-category. The converse is true with suitable conditions given by the Rohlin tower theorem.

\begin{defn}[Growth of a group]
Let $G$ be a finitely generated group and let $S$ be a symmetric set of generators (here, ``symmetric'' means that the identity element and the inverse of any element of $S$ belong to $S$). For each $g\in G$, let $l_S(g)$ be the minimum number $n\in\N$ such that $g$ can be expressed by the composition of at most $n$ elements of $S$. Set $S_n=\{\,g\in G\mid l_S(g)\leq n\,\}$. The {\em growth\/} of $G$ is the growth type of the sequence $\# S_n$, $\growth(G)=[\# S_n]$.
\end{defn}

The growth of a finitely generated group is independent of the choice of the finite set of generators.

We give a version of the notion of growth for pseudogroups as follows. In the case of pseudogroups, we have the operations of composition, inversion, restriction to open sets and combination. A set of generators is a set of elements of the pseudogroup such that any other transformation can be obtained from them by using the above operations. We assume that the set is symmetric in the sense that it contains the identity maps on the domains and images of its elements, and is closed by inversion; thus inversion can be removed from the above operations.

\begin{defn}[Growth of a pseudogroup \cite{Walczak}]
 Suppose that $\Gamma$ is finitely generated. Choose a finite symmetric set of generators $S$ and let $S_n$ be the set of transformations obtained by a composition of at most $n$ elements of $S$. Finally we define $\growth_S(\Gamma)=[\#S_n]$.

Let $\LB$ be a probability measure on $T$ invariant by $\Gamma$. Let $K\subset T$ be the support of $\LB$, which is a $\Gamma$-invariant closed subset of $T$. Then $\Gamma$ induces a pseudogroup $\Gamma_\LB$ on $K$, and a symmetric set $S$ of generators of $\Gamma$ induces a symmetric set of generators $S^\LB$ of $\Gamma_\LB$. The {\em \LB-growth\/} of $\Gamma$ associated to $S$ is $\growth_{S,\LB}(\Gamma)=\growth_{S^{\LB}}(\Gamma_\LB)$.
\end{defn}

The definition of growth of pseudogroups depends on the choice of the set of generators.

\begin{prop}[Upper bound for the secondary \LB-category]\label{p:upperboundsecondary}
 $\Cat_{\second}(\FF,\LB)\leq \growth_{S^\UU,\LB}(\Gamma)$, where $S^\UU$ is the finite symmetric system of generators defined by a regular foliated atlas $\UU$.
\end{prop}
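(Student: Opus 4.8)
The plan is to relate the data defining $\Cat(\FF,\LB,n,\UU)$ — small-length tangential contractions to a transversal, measured by $\LB$ — directly to the combinatorics of the holonomy pseudogroup generated by $S^\UU$ on a complete transversal $T$. By Proposition \ref{p:restricttotransversal}, up to the equivalence $\simeq$ of growth types I may compute $1/\Cat(\FF,\LB,n,\UU)$ using only contractions $H$ with $H(U\times\{1\})\subset T$, where $T=\bigsqcup_i S_i$ is the transversal associated to the regular atlas $\UU$. By Lemma \ref{l:vogt0} (Singhof--Vogt), for such a contraction the image $H(U\times\{1\})$ is a countable union of local transversals, and since any tangential homotopy follows locally a chain of plaques of $\UU$ (as used in Proposition \ref{p:Cat_II is independent of UU}), a contraction of length $\le n$ moves each point of $U$ to $T$ along a holonomy transformation lying in $S^\UU_{Cn}$ for a fixed constant $C$ depending only on $\UU$. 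Replacing $n$ by $Cn$ changes nothing at the level of growth types, so I may as well assume $C=1$.

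The key step is then a lower bound: I want to show that, up to constants, any contraction of length $\le n$ must spread the mass of a region near any point of $T$ over at most $\#S^\UU_n$ images, forcing the measure of the final step to be at least (roughly) $1/\#S^\UU_n$ of the measure of some fixed categorical region, hence $\Cat(\FF,\LB,n,\UU,T)\succeq 1/\#S^\UU_n$ as sequences, i.e. $1/\Cat(\FF,\LB,n,\UU,T)\preceq \#S^\UU_n$. Concretely: cover $X$ by finitely many categorical open sets $U_1,\dots,U_p$, and for each a contraction $H_j$ of length $\le n$ to $T$. Since $\LB$ is invariant under the pseudogroup, and each point $x$ is carried by $H_j$ to a point in $T$ via an element of $S^\UU_n$, the preimage in $T$ of the final transversal $H_j(U_j\times\{1\})$ meets any fixed plaque-sized neighbourhood $O$ of a support point of $\LB$ in at most $\#S^\UU_n$ translates; since the $O$'s covering $K=\supp\LB$ are themselves covered by the $U_j$, invariance of $\LB$ gives $\LB\bigl(H_j(U_j\times\{1\})\bigr)\ge c/\#S^\UU_n$ for a fixed $c>0$ and at least one $j$. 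Summing over $j$ yields $\Cat(\FF,\LB,n,\UU,T)\ge c/\#S^\UU_n$. Combined with the previous paragraph and the definition of $\preceq$ (with $A=1/c$ and the bounded factor absorbed into $B$), this gives $\Cat_{\second}(\FF,\LB)=[1/\Cat(\FF,\LB,n,\UU)]\le[\#S^\UU_n]=\growth_{S^\UU,\LB}(\Gamma)$.

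The subtle point — and the step I expect to be the main obstacle — is passing from $\Gamma$ to $\Gamma_\LB$ correctly: the bound must be phrased on the support $K$ of $\LB$, since away from $K$ there is no mass to account for and the counting argument would fail. This is why the statement involves $\growth_{S^\UU,\LB}(\Gamma)=\growth_{S^{\UU,\LB}}(\Gamma_\LB)$ and not $\growth_{S^\UU}(\Gamma)$. So throughout I should work on $K$: the neighbourhoods $O$ above must be $K$-relative, the count $\#S^\UU_n$ must be the count for the induced generating set $S^{\UU,\LB}$, and the "at most $\#S^\UU_n$ translates meeting $O$" estimate is really the statement that an $S^{\UU,\LB}_n$-orbit intersects a fixed small set in at most $\#S^{\UU,\LB}_n$ points. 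The invariance of $\LB$ under measurable transformations keeping leaves fixed (as recalled after \cite{Connes}) is what lets me push the measure of the final transversal back to $T$ and then down to $K$ without loss. Once the bookkeeping on $K$ is set up, the inequality is a direct counting-plus-invariance argument; the remaining details (choice of the finite subcover, uniform length bound $C$ from Proposition \ref{p:restricttotransversal}, and absorbing multiplicative and additive constants into the definition of $\preceq$) are routine.
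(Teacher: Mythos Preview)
Your strategy is the same as the paper's: reduce via Proposition~\ref{p:restricttotransversal} to contractions ending in $T$, observe that a length-$\le n$ contraction induces local holonomy maps lying in $S^{\UU,\LB}_n$ on $K=\supp_T(\LB)$, and then use a pigeonhole/invariance argument to bound $\Cat(\FF,\LB,n,\UU,T)$ below by $1/\#S^{\UU,\LB}_n$. You are also right that the whole count must be carried out on $K$, which is exactly why the statement involves $\growth_{S^\UU,\LB}(\Gamma)$.

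Where your write-up is looser than the paper is the ``Concretely'' step. The phrase ``the preimage in $T$ of the final transversal $H_j(U_j\times\{1\})$ meets any fixed plaque-sized neighbourhood $O$ in at most $\#S^\UU_n$ translates'' is not quite what is needed and its meaning is unclear (what preimage?). The paper avoids the auxiliary neighbourhoods $O$ and the unspecified constant $c$ altogether: after normalizing $\LB(T)=1$, it lists the finitely many local holonomy maps $h_1,\dots,h_K$ arising from all the contractions $H^1,\dots,H^N$, groups them according to which element $g_i\in S^{\UU,\LB}_n=\{g_1,\dots,g_M\}$ they restrict, and combines each group into a single map $q_i$. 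Since the domains of the $q_i$ cover $\supp_T(\LB)$, pigeonhole gives some $i_0$ with $\LB(\dom q_{i_0})\ge 1/M$; invariance then gives $\LB(\Image q_{i_0})\ge 1/M$, and $\Image q_{i_0}\subset\bigcup_i H^i(U_i\times\{1\})$ yields directly
\[
\frac{1}{\#S^{\UU,\LB}_n}\le\sum_i\LB(H^i(U_i\times\{1\}))<\Cat(\FF,\LB,n,\UU,T)+\varepsilon.
\]
So your plan is correct, but you should replace the vague $O$-covering argument by this direct grouping-and-pigeonhole step; it gives the clean constant $1$ (after normalization) and makes the role of $K$ transparent, since the grouping is by elements of $S^{\UU,\LB}_n$ rather than of $S^\UU_n$.
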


\begin{proof}
Since $\Cat(\FF,\LB)=0$, we can suppose that $\LB(T)<\infty$, or even $\LB(T)=1$, where $T$ is a disjoint union of transversals associated to charts of a regular finite foliated atlas \UU. In order to compute the secondary $\LB$-category, we use $\UU$ to define the length of tangential homotopies, and Proposition~\ref{p:restricttotransversal} to consider only tangential contractions finishing in $T$.

A finite family of holonomy transformations can be associated to any tangential contraction of finite length since, locally, a tangential contraction follows a chain of charts \cite{Vogt-Sinhof}. For each $n\in\N$, let $H:U\times [0,1]\to\FF$ be a tangential contraction such that $U\cap \supp_T(\LB)\neq\emptyset$ and $\length(H)\leq n$. It is clear that each holonomy transformation $h$ locally defined by $H$ on $\supp_T(\LB)$ is a restriction of elements in $S^{\UU,\LB}_{n}$. Given any $\varepsilon>0$, let $U_1,\dots,U_N$ be tangentially categorical open sets covering the ambient space, and let $H^1,\dots,H^N$ be respective tangential contractions of length $\le n$ such that
$$
\sum_i\LB(H^i(U_i\times\{1\}))<\Cat(\FF,\LB,n,\UU,T) + \varepsilon\;.
$$
Hence $\{U_i\cap\supp_T(\LB)\}_{i=1}^N$ is a covering of $\supp_T(\LB)$, and each $H^i$ defines a finite number of local holonomy maps obtained from $S^{\UU,\LB}_n$ by restriction. Let $\{h_1,\dots,h_K\}$ be the set of such holonomy maps, and let $S^{\UU,\LB}_n=\{g_1,\dots,g_M\}$, where $M=\#S^\LB_n$. We can suppose that each $U_i$ has a partition $\{U_{i,j},F_i\}_{1\leq j\leq N_i}$, where $F_i$ is a null transverse set, and the sets $U_{i,j}$ are open and so that each $U_{i,j}\cap \supp_T(\LB)$ is contained in the dominion of some $h_k$ (see Proposition~\ref{p:homotopyholonomy}). For $1\leq i\leq M$, let $F_i$ denote the family of maps $h_j$ obtained from $g_i$ by restriction. All of the maps in $F_i$ can be combined  to give another holonomy transformation $q_i$ which is a restriction of $g_i$; thus all the holonomy maps in $F_i$ are restrictions of $q_i$. Since the domains of the maps $q_i$ cover $\supp_T(\LB)$, it follows that there is some $i_0\in\{1,\dots,M\}$ such that $\LB(\dom(q_{i_0}))\ge1/M$. By the invariance of the measure,
$$
\frac{1}{M}\leq\LB(\Image(q_{i_0}))\leq\sum_i\LB(H^i(U_i\times\{1\})<\Cat(\FF,\LB,n,\UU,T) + \varepsilon\;.
$$
The proof follows by taking $\varepsilon>0$ arbitrarily small.
\end{proof}

\begin{rem}\label{l:growthgroup}
In the case of foliations given by suspensions of free actions of finitely generated groups on a locally compact Polish space, we can take the bound given by the growth of the group.
\end{rem}

\section{Case of free suspensions with Rohlin groups}\label{s:Rohlin}

In this section, we see a family of examples where the inequality of Proposition~\ref{p:upperboundsecondary} becomes an equality. They are the cases where the pseudogroup is generated by a free action of a Rohlin group.

\begin{defn}[Rohlin towers, Rohlin sets and Rohlin groups]
Let $G$ be a locally compact, second countable and Hausdorff topological group acting on a standard Borel space $X$, let $\LB$ be an invariant probability measure on $X$, and let $F\subset G$ be a Borel subset. We say that a Borel set $V\subset X$ is an {\em F-base\/} in $X$ if $FV=\bigcup_{f\in F} f(V)$ is Borel, $\LB(FV)>0$, and the sets $f(V)$ ($f\in F$) are disjoint from each other. The set $FV$ in the previous definition is called an {\em $F$-tower\/}.

A relatively compact set $F\subset G$ is called a {\em Rohlin set\/} if, for any free action of $G$ on a standard Borel space $X$ with an invariant probability measure $\LB$ and for any $\varepsilon>0$, there exists an $F$-tower $FV\subset X$ with $\LB(FV)>1-\varepsilon$.

A topological group $G$ as above is called a {\em Rohlin group\/} if, for any compact subset $K\subset G$, there exists a Rohlin set $F\subset G$ so that $K\subset F$.
\end{defn}

\begin{thm}[C.~Series \cite{Series}]\label{t:series}
Any locally compact, Hausdorff, second countable, almost connected and amenable group is a Rohlin group.
\end{thm}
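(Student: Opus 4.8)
Since the statement above is a theorem of Series \cite{Series}, I sketch the strategy of its proof. The plan is to use the structure theory of almost connected locally compact groups to reduce the Rohlin property to two model situations — compact groups and the vector groups $\R^n$ — and then to reassemble a Rohlin set for $G$ by climbing a tower of closed extensions. Amenability enters precisely at the structural step: it is what forces the noncompact semisimple pieces of $G$ to disappear, so that $G$ can be presented as a limit of Lie groups each of which is built by successive closed extensions whose consecutive quotients are compact groups or vector groups. The compact case is immediate: if $G$ is compact and acts freely on a standard Borel space $X$ with invariant probability measure $\LB$, then the orbit map $X\to X/G$ has a Borel section $s$ (measurable selection), and, putting $V=s(X/G)$, freeness makes the sets $g(V)$ ($g\in G$) pairwise disjoint with $GV=X$; thus $F=G$ is a Rohlin set and every compact group is a Rohlin group.

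The heart of the argument is the case $G=\R^n$. The crucial tool is a measurable cross-section theorem in the spirit of Ambrose--Kakutani: a free Borel action of $\R^n$ on $(X,\LB)$ admits a Borel set $C$ meeting every orbit in a uniformly separated and relatively dense (``Delone-type'') subset, obtained from a maximal $\delta$-separated Borel family together with measurable selection. The Voronoi cells of $C$ partition each orbit into pieces of uniformly bounded geometry, and transporting this partition gives, modulo a null set, a Borel partition of $X$. Given the prescribed compact set $K\subset\R^n$, choose $\delta$ and the associated scales large relative to $K$; after a measurable adjustment of the cells one arranges that each cell is a translate of a piece of a single relatively compact set $F\supset K$, the translates being disjoint and exhausting all but a boundary layer whose density in each orbit is $O(1/\mathrm{diam}\,F)\to0$. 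Selecting one point per cell produces the base $V$, and $\LB(FV)>1-\varepsilon$ follows. Hence $\R^n$ is a Rohlin group.

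It remains to propagate the property along extensions. By the Iwasawa--Malcev structure theory together with amenability, $G$ is a projective limit of Lie groups, each amenable almost connected Lie group is built by finitely many closed extensions from a compact group and one-dimensional vector groups, and compact normal subgroups can be divided out using the compact case. So it suffices to show: if $1\to N\to H\to Q\to1$ is a closed extension with $N\cong\R^a$ (or $N$ compact) and $Q$ already a Rohlin group, then $H$ is a Rohlin group. Given a free $H$-action and a compact $K\subset H$, restrict to $N$ and build an $N$-tower over the orbits of $N$ as above (or a section, when $N$ is compact); the induced $Q$-action on the Borel space of $N$-plaques is free, so it carries an $F_Q$-tower by hypothesis; pulling this back and composing with the $N$-tower yields a relatively compact $F\subset H$ with $K\subset F$ and $\LB(HV)>1-\varepsilon$. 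Climbing the finite tower for each Lie quotient $G/K_i$ and then passing to the limit over the compact normal subgroups $K_i$ completes the reduction.

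\textbf{Main obstacle.} The delicate part is the $\R^n$ step: constructing a cross-section with enough built-in uniformity that the tiling it induces leaves only a boundary-sized remainder, while keeping every choice Borel measurable, and then carrying out the ``shape bookkeeping'' that makes the cells subordinate to a single relatively compact $F$ enlarging the prescribed $K$. A close second is the compatibility issue in the extension step — verifying that the space of $N$-plaques genuinely carries a free $Q$-action and that the $N$-tower and the $Q$-tower can be chosen simultaneously — together with checking that ``relatively compact and containing $K$'' survives all the way up the tower and into the projective limit.
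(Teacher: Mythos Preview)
The paper does not contain a proof of this theorem: it is stated as a result of Series and simply cited from \cite{Series}, with no argument given. There is therefore nothing in the paper to compare your proposal against.

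That said, your sketch is a faithful outline of Series's original strategy: the reduction via Yamabe-type structure theory to Lie quotients, the use of amenability to eliminate noncompact semisimple factors so that one is left with iterated extensions by compact groups and vector groups, the trivial compact case via a Borel section, the $\R^n$ case via a measurable cross-section and Voronoi-type tiling with controlled boundary, and the propagation through closed extensions. As a proof \emph{sketch} this is accurate and there is no genuine gap in the overall architecture. The places you flag as obstacles --- the measurable-selection and boundary-estimate work in the $\R^n$ step, and the verification that the quotient action on the space of $N$-plaques is free and Borel --- are exactly where Series's paper spends its effort, so your assessment of the difficulties is on target.
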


\begin{prop}[Open approximation to a Rohlin tower]\label{p:open approximation to a Rohlin tower}
Let $G$ be a discrete Rohlin group acting freely on a locally compact Polish space $T$ with a regular probability invariant measure $\LB$. Let $F$ be a Rohlin set, and let $\varepsilon>0$. Then there exists a sequence $\{V_k\}_{k\in\N}$ of open $F$-bases such that $\sum_k\LB(FV_k)> 1 - \varepsilon$.
\end{prop}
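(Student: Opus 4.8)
The goal is: given a discrete Rohlin group $G$ acting freely on a locally compact Polish space $T$ with regular invariant probability measure $\LB$, a Rohlin set $F$, and $\varepsilon > 0$, produce a sequence $\{V_k\}_{k\in\N}$ of *open* $F$-bases with $\sum_k \LB(FV_k) > 1-\varepsilon$.

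Let me think about what we have and what we need.

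From the Rohlin property: there exists a Borel $F$-base $V$ with $\LB(FV) > 1 - \varepsilon/2$ (say). The sets $fV$, $f \in F$, are disjoint, $FV$ is Borel.

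We want to replace $V$ by open sets. The natural idea: use regularity of $\LB$ to approximate $V$ from inside by a compact set $C \subseteq V$ with $\LB(V \setminus C)$ small, then approximate $C$ from outside by an open set $O \supseteq C$ with $\LB(O \setminus C)$ small. But the problem is that $O$ might not be an $F$-base: the translates $fO$, $f \in F$, might not be disjoint.

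Hmm. Since $F$ is finite (it's a relatively compact subset of a discrete group, so finite), $F = \{f_1, \dots, f_m\}$. The sets $f_i V$ are disjoint. We want to shrink to get disjoint open translates.

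Here's the approach. Take compact $C \subseteq V$ with $\LB(V \setminus C) < \delta$ for small $\delta$ to be chosen. The sets $f_i C$, $i = 1, \dots, m$, are compact and pairwise disjoint (subsets of the disjoint $f_i V$). Since $T$ is a locally compact Polish space, it's normal (metrizable), so disjoint compact sets can be separated: there exist pairwise disjoint open sets $W_1, \dots, W_m$ with $f_i C \subseteq W_i$. Now set $O_i = W_i \cap f_i(\text{something})$... wait, I want a single open $V'$ such that $f_i V'$ are disjoint.

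Better: let $O = \bigcap_{i=1}^m f_i^{-1}(W_i)$. This is open, and $C \subseteq f_i^{-1}(f_i C) \subseteq f_i^{-1}(W_i)$ for each $i$, so $C \subseteq O$. Moreover for each $i$, $f_i O \subseteq W_i$, and the $W_i$ are pairwise disjoint, so the $f_i O$ are pairwise disjoint. Thus $O$ is an open $F$-base (note $FO = \bigsqcup_i f_i O$ is open, hence Borel, and $\LB(FO) \geq \LB(FC) = \sum_i \LB(f_i C) = \sum_i \LB(C) = m\LB(C) \geq m(\LB(V) - \delta) = \LB(FV) - m\delta$, using invariance of $\LB$). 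Choosing $\delta$ small enough (e.g. $\delta < \varepsilon/(2m)$) gives a single open $F$-base $O = V_1$ with $\LB(FV_1) > 1 - \varepsilon$, and we can then take $V_k = \emptyset$ for $k \geq 2$ (or, if a genuine sequence is wanted, split $O$ into countably many pieces).

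The main obstacle I anticipate is exactly the disjointness of translates: naive outer regularity destroys it, and the fix is the intersection-of-preimages trick above, which works because $F$ is finite and $T$ is normal so that compact sets admit separating open neighborhoods. A secondary point to be careful about: verifying $FO$ is Borel (automatic since it is open) and that the measure estimate uses $G$-invariance of $\LB$ correctly — $\LB(f_i C) = \LB(C)$ — together with inner regularity to pass from $\LB(C)$ back to $\LB(V)$. If one insists the statement needs a nontrivial infinite sequence (the hypothesis "$\sum_k$" suggests this is the form used downstream, presumably to build a tower covering nearly all of $T$ by many disjoint open bases), then one iterates: having found $V_1$, the complement $T \setminus \overline{FV_1}$ (or $T \setminus FV_1$) still carries most of the remaining mass, apply the Rohlin property to the induced action/measure restricted there, and repeat; the countable family $\{V_k\}$ produced this way has $\sum_k \LB(FV_k) \to \LB(T) = 1 > 1 - \varepsilon$, and in fact the argument shows one can exhaust the space up to a null set. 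I would organize the write-up as: (1) reduce to producing one open $F$-base of mass $> 1 - \varepsilon$; (2) the compact-approximation plus preimage-intersection construction; (3) the measure bookkeeping via invariance and inner regularity; (4) if needed, the iteration to get a genuine sequence.
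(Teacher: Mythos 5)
Your argument is correct, but it takes a genuinely different route from the paper's. The paper uses the \emph{outer} half of regularity: it encloses the Borel base $B$ in an open set $V$ with $\LB(V)<\LB(B)+\varepsilon/\#F$ and then, using continuity, freeness and the finiteness of $F$ (each point has a small neighborhood whose $F$-translates are disjoint), cuts $V$ into countably many open $F$-bases $V_k$ plus a leftover closed set of boundaries arranged to be $\LB$-null; the sequence in the statement is forced by this chopping, since the enlarged open set $V$ itself need not have disjoint translates. You instead use the \emph{inner} half of regularity: a compact $C\subseteq B$ with $\LB(B\setminus C)$ small, separation of the finitely many pairwise disjoint compacts $fC$ ($f\in F$) by pairwise disjoint open sets $W_f$, and the pullback $O=\bigcap_{f\in F}f^{-1}(W_f)$, which is a single open $F$-base with $\LB(FO)>1-\varepsilon$. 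This yields a cleaner and in fact stronger conclusion (one open base instead of a sequence), avoids the paper's somewhat delicate step of arranging the discarded boundary set to be null, and still provides everything needed downstream: in Proposition~\ref{p:IgualdadRohlin} one wants mutually disjoint $V_k$ with $\LB\bigl(\bigcup_k V_k\bigr)\le 1/\#F$, and your single $O$ satisfies this automatically because its translates are disjoint, so $\#F\cdot\LB(O)=\LB(FO)\le 1$. The only cosmetic point is the one you flag yourself: the empty set is not an $F$-base (the definition requires $\LB(FV)>0$), so to produce a literal sequence you should either split $O$ into countably many positive-measure open pieces, repeat $O$, or simply note that a finite family suffices for all uses of the proposition; also make sure $\delta$ is small enough that $\LB(C)>0$, which your estimate already gives for $\varepsilon<1$.
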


\begin{proof}
Any Rohlin set $F$ is finite since $G$ is discrete and $F$ is relatively compact by definition. Let $B$ be an $F$-base. By regularity of $\LB$, there exists an open set $V\supset B$ such that $|\LB(V)-\LB(B)|<\varepsilon/\# F$. By continuity, finiteness of $F$ and since the action is free, there is a partition $\{C,V_1,V_2,\dots\}$ of $V$ such that $C$ is closed on $V$, each $V_i$ is an open $F$-base and $\LB(C)=0$ ($C$ is the union of the boundaries of the sets $V_i$. Finally, by the invariance of $\LB$,
$$
  1-\varepsilon<\LB(FB)\leq \sum_k\LB(FV_k)\leq \# F\cdot\left(\LB(B)+ \frac{\varepsilon}{\# F}\right)\leq 1\;.\qed
$$
\renewcommand{\qed}{}
\end{proof}

\begin{prop} \label{p:IgualdadRohlin}
Let $M$ and $F$ be compact manifolds, and let $h:\pi_1(M)\to \Homeo(F)$ be a homomorphism. Suppose that $h(\pi_1(M))$ is a Rohlin group, the induced action of $\pi_1(M)$ on $F$ is free, and there exists an invariant regular probability measure $\LB$ on $F$. Then $\Cat_{\second}(\widetilde{M}\times_h F,\LB)=\growth(h(\pi_1(M)))$.
\end{prop}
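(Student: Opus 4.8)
The plan is to prove the two inequalities separately, using Proposition~\ref{p:upperboundsecondary} for one direction and the Rohlin tower approximation (Proposition~\ref{p:open approximation to a Rohlin tower}) for the other. Write $\Gamma=h(\pi_1(M))$ and note that the suspension foliation $\FF$ on $\widetilde{M}\times_h F$ has holonomy pseudogroup generated by the free action of $\Gamma$ on $F$, with a canonical complete transversal $T$ (a copy of $F$) carrying the invariant regular probability measure $\LB$. Since the action is free and $F$ is compact, the support of $\LB$ is all of $F$; one should first record that $\Cat(\FF,\LB)=0$, which follows from Corollary~\ref{c:minimalfoliation} or directly from the dimensional bound together with the Rohlin property, so that $\Cat_{\second}$ is defined. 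For the upper bound $\Cat_{\second}(\widetilde{M}\times_h F,\LB)\le\growth(\Gamma)$, I would invoke Proposition~\ref{p:upperboundsecondary} together with Remark~\ref{l:growthgroup}: for a free suspension the $\LB$-growth of the holonomy pseudogroup associated to the atlas generators is dominated by the growth of the group $\Gamma$, because a chain of plaques of length $n$ translates into a word of bounded length in a fixed finite generating set of $\Gamma$.

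The substantive direction is the reverse inequality $\growth(\Gamma)\le\Cat_{\second}(\widetilde{M}\times_h F,\LB)$, equivalently a lower bound $\#S_n\preceq 1/\Cat(\FF,\LB,Cn,\UU)$ for a suitable constant $C$. Here is the strategy. Fix a finite symmetric generating set $S$ of $\Gamma$ coming from loops in $M$ realized in the atlas $\UU$, so that $S_n$ is the ball of radius $n$ in $\Gamma$. The goal is to show: if $\{U_i\}$ is any covering of the ambient space by $\FF$-categorical open sets with contractions $H^i$ of length $\le n$ finishing (after Proposition~\ref{p:restricttotransversal}) in $T=F$, then $\sum_i\LB(H^i(U_i\times\{1\}))\ge c/\#S_{Cn}$ for constants $c,C$ independent of $n$. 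To do this, for each categorical open set $U_i$ the contraction $H^i$ pushes $U_i$ into $T$ along holonomy maps which, on $T$, are restrictions of elements of $S_{Cn}$ (local contractions follow chains of plaques of bounded length, by Singhof--Vogt). So the transversal piece $T$ is, up to a $\LB$-null set, covered by domains of finitely many partial transformations, each a restriction of some $g\in S_{Cn}$, and the total $\LB$-measure of their images is $\sum_i\LB(H^i(U_i\times\{1\}))$. Combining all restrictions of a single $g$ by the combination operation as in the proof of Proposition~\ref{p:upperboundsecondary}, we get $\le\#S_{Cn}$ partial transformations $q_g$ ($g\in S_{Cn}$) whose domains cover $T$ (mod null), so some $q_{g_0}$ has $\LB(\dom q_{g_0})\ge 1/\#S_{Cn}$, hence $\LB(\image q_{g_0})\ge 1/\#S_{Cn}$ by invariance, and $\image q_{g_0}$ is contained in the union of the final steps; taking infimum over coverings gives $\Cat(\FF,\LB,n,\UU,T)\ge 1/\#S_{Cn}$, i.e. $\#S_{Cn}\ge 1/\Cat(\FF,\LB,n,\UU,T)$. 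That inequality alone only re-proves Proposition~\ref{p:upperboundsecondary}; the Rohlin hypothesis is what upgrades it. The point is that the Rohlin tower gives, for any $\varepsilon$, an $F$-base $V$ with $\LB(FV)>1-\varepsilon$ where $F\supset S_n$; by Proposition~\ref{p:open approximation to a Rohlin tower} this can be taken to be a countable family of \emph{open} $F$-bases $\{V_k\}$. Then a tangential contraction that pushes each open tower-piece $g(V_k)$ back to $V_k$ and then contracts $V_k$ inside a single chart produces an explicit categorical covering: its final step lands in $\bigsqcup_k V_k$, whose total measure is $\le\sum_k\LB(V_k)\le\sum_k\LB(FV_k)/\#S_n\le 1/\#S_n$ (up to $\varepsilon$), and each such path has length bounded by $C'n$ because it consists of one holonomy word in $S_n$ followed by a contraction of bounded length. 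Hence $\Cat(\FF,\LB,C'n,\UU,T)\le 1/\#S_n+\varepsilon'$ for all $\varepsilon'>0$, so $\Cat(\FF,\LB,C'n,\UU,T)\le 1/\#S_n$, which by Proposition~\ref{p:restricttotransversal} gives $1/\Cat(\FF,\LB,C'n,\UU)\succeq\#S_n=\#S_n$, i.e. $\growth(\Gamma)\le\Cat_{\second}$.

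Putting the two estimates together yields $\growth(\Gamma)\le\Cat_{\second}(\widetilde{M}\times_h F,\LB)\le\growth(\Gamma)$, hence equality in $\mathcal{E}$. The main obstacle I anticipate is the careful construction in the reverse inequality of a genuine tangential contraction (a foliated, leafwise map, continuous in the transverse direction) whose final image is exactly the open tower bases $\bigsqcup_k V_k$ and whose leafwise length is controlled by $n$ uniformly: one must choose, for each generator-word realizing an element of $S_n$, a leafwise path in $M$ of length $\le Cn$ representing it, then spread these choices continuously over the transversal using the openness of the $V_k$ from Proposition~\ref{p:open approximation to a Rohlin tower}, and finally patch with local contractions inside charts (length $\le 1$) via the operation $*$ of Definition~\ref{d:operation homotopy}. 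Handling the residual null/closed set $C$ from the tower partition costs nothing by the remark that null transverse sets are inessential, using contractions of length $1$ with arbitrarily small final measure. The bookkeeping of the covering count (how many categorical open sets, and that it stays finite) and of the constants $C,C'$ relating $\UU$-length to word length is routine given the compactness of $M$ and the finiteness of $\UU$.
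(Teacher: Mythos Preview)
Your approach is essentially the paper's: one inequality from Proposition~\ref{p:upperboundsecondary}, the other by using Proposition~\ref{p:open approximation to a Rohlin tower} to build an explicit covering by saturations of the open tower pieces $g(V_k)$ and contracting each to $V_k$, giving $\Cat(\FF,\LB,Cn)\lesssim M/\#S_n+\varepsilon$. The long middle paragraph re-deriving the pigeonhole bound is unnecessary (as you note), but harmless.

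Two small points need correcting. First, the claim that ``since the action is free and $F$ is compact, the support of $\LB$ is all of $F$'' is false in general; fortunately nothing in the argument uses it. Second, and more substantively, the residual set $F\setminus\bigcup_k S_nV_k$ is \emph{not} null---it has measure $<\varepsilon$, but that is all the Rohlin tower gives you---so the remark about null transverse sets does not dispose of it. Moreover this complement need not be open, so you cannot directly take its saturation in the charts as a categorical open set. The paper fixes this by choosing, via regularity of $\LB$, a compact $K_n\subset\bigcup_k S_nV_k$ with $\LB(F\setminus K_n)<2\varepsilon$; then $F\setminus K_n$ is open, its saturations $\str_i(F\setminus K_n)$ in each chart $U_i$ are genuine tangentially categorical open sets contracting (with length $\le1$) to transversals of total measure $<2M\varepsilon$, and together with the $\str_i(S_nV_k)$ they cover the ambient space. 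With this adjustment your bookkeeping goes through.
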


\begin{proof}
The inequality $\Cat_{\second}(\widetilde{M}\times_h F,\LB)\leq\growth(h(\pi_1(M)))$ holds by Proposition~\ref{p:upperboundsecondary}. Now, let $U_1,\dots,U_M$ be a covering of $M$ by contractible open sets and let $p:\widetilde{M}\times_h F \to M$ be the projection from the suspension to the base space. We have $p^{-1}(U_i)\approx U_i\times F$  for all $i$, obtaining a regular foliated atlas $\UU$ for the computation of the secondary \LB-category. Let $\varepsilon>0$ and let $S$ be a symmetric set of generators of $h(\pi_1(M))$. By the Proposition~\ref{p:open approximation to a Rohlin tower}, for each $n\in\N$, there exists a sequence $V^n_k$ of mutually disjoint open $S_n$-bases (in $F$) such that $1\geq\sum_k\LB(S_n V^n_k)> 1 - \varepsilon$. Moreover $\LB(\bigcup_{k} V^n_k)\leq 1/\# S_n$ by the invariance of $\LB$.
By the regularity of $\LB$, there is a compact subset $K_n\subset \bigcup_k S_n V^n_k$ such that $1-2\varepsilon<\LB(K_n)<1-\varepsilon$; clearly, $\LB(F\setminus K_n)<2\varepsilon$. For each $k$, consider the saturation $\str_i(S_n V^n_k)$ of $S_n V^n_k$ on each $p^{-1}(U_i)\approx U_i\times F$; these sets are open in the ambient space and they are clearly tangentially contractible. The contractibility of $U_i$ induces a tangential contraction of $\str_i(S_n V^n_k)$ to the transversal $S_n V^n_k$, which in turn contracts to $V_k^n$ by a homotopy induced by the holonomy maps in $S_n$. Thus, by compactness, there exists a constant $K$ independent of $n$ such that all of these induced homotopies have length $\le Kn$. On the other hand, the saturations $\str_i(F\setminus K)$ in each $p^{-1}(U_i)$ contract to $F\setminus K$.

Therefore, for each $n\in\N$, the family
  \[
    \{\,\str_i(F\setminus K),\str_i(S_n V^n_k)\mid i\in\{1,\dots,M\},\ k\in\N\,\}
  \]
is a covering of $\widetilde{M}\times_h F$ by tangentially contractible open sets, giving
$$
\Cat(\FF,\LB,\UU,Kn)\leq 4M\varepsilon + \LB\left(\bigcup_k V_k\right)\leq 4M\varepsilon + \frac{M}{\# S_n}\;.\qed
$$
\renewcommand{\qed}{}
\end{proof}

\begin{cor}
Let $\FF$ be a lamination obtained by a free suspension of a group of homeomorphisms $G$ of a Polish space preserving a probability invariant measure. If $\Cat_{\second}(\FF,\LB)\neq\growth(G)$ then $G$ is not a Rohlin group.
\end{cor}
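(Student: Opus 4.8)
The plan is to prove the contrapositive: if $G$ \emph{is} a Rohlin group, then the inequality of Proposition~\ref{p:upperboundsecondary} is an equality, $\Cat_{\second}(\FF,\LB)=\growth(G)$, and the corollary is immediate. Write the suspension as $\widetilde M\times_h F$ with $M$ a compact manifold, $h\colon\pi_1(M)\to\Homeo(F)$ the defining homomorphism, $G=h(\pi_1(M))$, and $F$ now only assumed to be a compact Polish space carrying the regular invariant probability measure $\LB$; the action of $\pi_1(M)$ on $F$ is free since the suspension is free. In particular $G$ is finitely generated, so $\growth(G)$ is defined, and (as in Proposition~\ref{p:IgualdadRohlin}) $\widetilde M\times_h F$ is compact, so the whole secondary-category machinery applies.

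For the upper bound, $\Cat_{\second}(\FF,\LB)\le\growth(G)$ is Proposition~\ref{p:upperboundsecondary} together with Remark~\ref{l:growthgroup}: a regular foliated atlas adapted to the suspension has holonomy pseudogroup generated by the images under $h$ of a finite symmetric generating set $S$ of $G$, so $\growth_{S^\UU,\LB}(\Gamma)\le[\#S_n]=\growth(G)$ (the $\LB$-growth of the induced pseudogroup on $\supp\LB$ is at most the growth of $G$). For the reverse inequality I would run the proof of Proposition~\ref{p:IgualdadRohlin} with $F$ in place of a manifold. Fix $S$ as above. Since $G$ is a discrete Rohlin group and every ball $S_n$ is a finite (hence relatively compact) subset, $S_n$ is contained in a Rohlin set, so Proposition~\ref{p:open approximation to a Rohlin tower} produces, for each $n$ and each $\varepsilon>0$, mutually disjoint open $S_n$-bases $V^n_k\subset F$ with $\sum_k\LB(S_nV^n_k)>1-\varepsilon$ and $\LB(\bigcup_kV^n_k)\le 1/\#S_n$. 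Saturating the sets $S_nV^n_k$ and a compact exhaustion of their union over a fixed finite cover of $M$ by contractible open sets, and contracting first along $M$ and then along the holonomy maps in $S_n$, one gets a cover of $\FF$ by tangentially contractible open sets whose contractions have length $\le Kn$ for a constant $K$ independent of $n$ (by compactness of the ambient space), with total final measure $\le 4N\varepsilon+N/\#S_n$, where $N$ is the number of charts. Letting $\varepsilon\to0$ gives $\Cat(\FF,\LB,Kn,\UU)\le N/\#S_n$, hence $\growth(G)=[\#S_n]\le\Cat_{\second}(\FF,\LB)$.

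The two inequalities give $\Cat_{\second}(\FF,\LB)=\growth(G)$ whenever $G$ is Rohlin, which is the contrapositive of the statement. The only point requiring care — and the main, if modest, obstacle — is verifying that the proof of Proposition~\ref{p:IgualdadRohlin} nowhere uses the manifold structure of $F$: it uses only that $F$ is a compact Polish space with a regular invariant probability measure on which $G$ acts freely (for Proposition~\ref{p:open approximation to a Rohlin tower}), that $M$ admits a finite cover by contractible open sets (for the foliated atlas and the tangential contractions along the base), and compactness of the ambient lamination (for the uniform bound $Kn$ on the lengths of the auxiliary homotopies); all of these hold in the present setting, so the argument transfers without change, and $\Cat_{\second}(\FF,\LB)\neq\growth(G)$ forces $G$ not to be a Rohlin group.
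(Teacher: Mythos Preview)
Your proposal is correct and follows the paper's own (implicit) argument: the corollary is stated immediately after Proposition~\ref{p:IgualdadRohlin} with no separate proof, so the intended reasoning is exactly the contrapositive you give --- if $G$ is Rohlin then Proposition~\ref{p:IgualdadRohlin} forces $\Cat_{\second}(\FF,\LB)=\growth(G)$. Your additional care in checking that the proof of Proposition~\ref{p:IgualdadRohlin} uses only that $F$ is a (locally) compact Polish space with a regular invariant probability measure, rather than a manifold, is a legitimate point the paper glosses over, but it does not constitute a different approach.
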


\begin{exmp}\label{exmp:secondary LB-category of a minimal foliation by hyperplanes on the n-dimensional torus}
The secondary \LB-category of a minimal foliation by hyperplanes on the $n$-dimensional torus with the Lebesgue measure in a transverse circle is $[1,2^n,3^n,\dots]$. Thus the secondary \LB-category distinguishes the dimension of the ambient manifold of these foliations.
\end{exmp}

\begin{rem}
The upper bound of Proposition~\ref{p:upperboundsecondary} is an equality in the case of Rohlin suspensions (Proposition~\ref{p:IgualdadRohlin}), but not in general as shown by the following simple example. Let $(T^3,\FF_1)$ be the foliation by dense cylinders and $(T^3,\FF_2)$ a minimal foliation by hyperplanes. Consider the usual Lebesgue invariant measures in a transverse circle in both cases. Let $\FF=\FF_1\sqcup \FF_2$ on $T^3\sqcup T^3$. We have $\Cat(\FF,\LB,k)\sim \frac{1}{k}+\frac{1}{k^2}$ and $\Cat_{\second}(\FF,\LB)=[1,2,3,\dots]$. However, $\growth_{\LB}(\Gamma_{\FF_1\sqcup\FF_2})=\growth_{\LB}(\Gamma_{\FF_2})=[1,2^2,3^2,\dots]$.
\end{rem}

\section{Pseudogroup invariance}

The nullity or the positivity of the \LB-category is an invariant of the holonomy pseudogroup and the invariant measure \cite{Menino}, and the secondary \LB-category is defined in the case of zero \LB-category. The aim of this section is to show the same kind of invariance for the secondary category. Consider the definitions and notation of Section~\ref{s:invariance}.

We recall here a result of topological dimension that will be useful in this section.

\begin{prop}[Dimensional trick \cite{James}]\label{p:dimensional trick}
Let $X$ be a paracompact space of finite topological dimension and let $\UU$ be an open covering of $X$. Then there exists a covering $\{V_0,\dots,V_{\dim X}\}$ of $X$ such that each $V_i$ is a union of a countable family of disjoint open sets $V_{ij}$ and the open covering $\{V_{ij}\}$ is a refinement of $\UU$.
\end{prop}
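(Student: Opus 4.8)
The plan is to obtain this from two classical facts of covering--dimension theory together with a colouring of a barycentric subdivision. Write $n=\dim X$. The first step is to invoke the order refinement theorem for paracompact spaces: since $X$ is paracompact and $\dim X\le n$, the cover $\UU$ admits a locally finite open refinement $\WW=\{W_\alpha\}$ of \emph{order} $\le n+1$, i.e.\ no point of $X$ lies in more than $n+1$ members of $\WW$. When $\UU$ is countable --- which is the only case we need, since in our applications $X$ is compact metrizable, hence Lindel\"of --- the cover $\WW$ may also be taken countable. Fix a partition of unity $\{\phi_\alpha\}$ subordinate to $\WW$ with $\{\phi_\alpha>0\}\subset W_\alpha$, and let $f\colon X\to |N(\WW)|$ be the associated canonical map into the geometric realization of the nerve $N(\WW)$ (sending $x$ to the point with barycentric coordinates $(\phi_\alpha(x))_\alpha$); because $\WW$ has order $\le n+1$, the simplicial complex $N(\WW)$ has dimension $\le n$.

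Next I would pass to the barycentric subdivision $N(\WW)'$ and colour each of its vertices --- these are the simplices $\sigma$ of $N(\WW)$ --- by $c(\sigma):=\dim\sigma\in\{0,1,\dots,n\}$. If two vertices of $N(\WW)'$ are joined by an edge, then one is a proper face of the other, so they receive different colours; hence any two vertices of the same colour are non-adjacent in $N(\WW)'$, and therefore their open stars $\mathrm{St}'(\cdot)$ in $N(\WW)'$ are disjoint. For a vertex $\sigma$ of $N(\WW)'$ put $V_\sigma:=f^{-1}\!\big(\mathrm{St}'(\sigma)\big)$. Standard properties of the barycentric subdivision give $\mathrm{St}'(\sigma)\subset\mathrm{St}(v_\alpha)$ whenever $v_\alpha$ is a vertex of the simplex $\sigma$, where $\mathrm{St}(v_\alpha)$ is the open star of $v_\alpha$ in $N(\WW)$; and $f^{-1}\!\big(\mathrm{St}(v_\alpha)\big)=\{\phi_\alpha>0\}\subset W_\alpha$. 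Hence each $V_\sigma$ is open and the family $\{V_\sigma\}_\sigma$ refines $\WW$, so it refines $\UU$. Since the open stars of the vertices of $N(\WW)'$ cover $|N(\WW)'|=|N(\WW)|\supset f(X)$, the family $\{V_\sigma\}_\sigma$ covers $X$.

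Finally, for $i\in\{0,\dots,n\}$ I would set $V_i:=\bigcup_{c(\sigma)=i}V_\sigma$ and let $V_{ij}$ run over the sets $V_\sigma$ with $c(\sigma)=i$. For fixed $i$ these are pairwise disjoint open sets (disjointness of the stars of equally coloured vertices passes to preimages under $f$), their union is $V_i$, the refinement $\{V_{ij}\}$ of $\UU$ is countable whenever $\WW$ is, and $\{V_0,\dots,V_{\dim X}\}$ covers $X$; this is exactly what is claimed. I expect the only real content to be the two classical inputs: the order-$(n+1)$ refinement theorem for paracompact spaces of dimension $\le n$, and the dimension colouring of the barycentric subdivision together with the verification that pulling the open stars of $N(\WW)'$ back along $f$ simultaneously produces a refinement of $\UU$ and splits into $n+1$ disjoint families.
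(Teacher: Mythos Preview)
Your argument is correct and is essentially the classical proof of this ``colouring'' lemma (order-$(n{+}1)$ refinement, nerve map, barycentric subdivision coloured by dimension, pullback of open stars). The paper itself does not prove this proposition at all: it merely quotes the result with a reference to \cite{James}, so there is no ``paper's proof'' to compare against; your write-up supplies exactly the standard argument that underlies the cited reference.

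One small point worth making explicit: the statement asks that each $V_i$ be a \emph{countable} union of disjoint open sets. Your construction yields a family $\{V_\sigma\}$ indexed by the simplices of $N(\WW)$, which need not be countable for an arbitrary paracompact $X$. You handle this correctly by noting that in the paper's applications $X$ is compact metrizable (hence Lindel\"of), so $\WW$ --- and therefore the index set of simplices --- can be taken countable. It would be cleaner to either state this Lindel\"of hypothesis up front, or to observe that the word ``countable'' in the proposition is inessential for the way the result is used later (only the disjointness and the bound $\dim X+1$ on the number of colours matter in the proofs of Section~7).
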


\begin{defn}
Let $\Gamma$ be a finitely generated pseudogroup of local transformations of a locally compact Polish space $T$. Let $\LB$ be an invariant measure of $\Gamma$ and suppose that $\Cat(\Gamma,\LB)=0$. Let $S$ be a symmetric set of generators of $\Gamma$. The {\em length\/} of a map $h\in\Gamma$ is the minimum integer $k$ such that $h$ can be locally expressed as a composition of at most $k$ elements in $S$. 

A {\em deformation\/} of $U\subset T$ is a map $h:U\to T$ that is combination of maps $h_i:U_i\to T$ in $S_\infty=\bigcup_n S_n$ restricted to disjoint open sets; we may use the notation $h\equiv(h_i)$. It is said that $U=\sqcup_i U_i$ is {\em deformable\/} if there is a deformation of $U$. The pairs $(U_i,h_i)$ are called {\em components\/} of $h$. Thus, if $h\equiv(h_i)$ is a deformation of an open set $U\subset T$ and each $h_i$ belongs to $S_k$, then we say that the {\em length\/} of the deformation is $\le k$. 
\end{defn}

\begin{defn}[Secondary \LB-category of pseudogroups]
With the above hypothesis and notations, let
$$
\Cat(\Gamma,\LB,S,n)=\inf_{\UU,h^U}\sum_{U\in\UU}\Lambda(h^U(U))\;,
$$
where $\UU$ runs in the family of open coverings of $T$, and, for each $U\in\UU$, $h^U\equiv(h^U_i)$ runs in the family of deformations of length $\leq n$ of $U$. The {\em secondary \LB-category\/} of $(\Gamma,S)$ is $\Cat_{\second}(\Gamma,\LB,S)=[1/\Cat(\Gamma,\LB,S,n)]$.
\end{defn}

\begin{defn}[Compact generation \cite{Haefliger}]\label{d:compact generation}
Let $\Gamma$ be a pseudogroup of local transformations of a
locally compact space $T$. It is said that $\Gamma$ is {\em compactly generated\/} if there is a relatively compact
open set $U$ in $T$ meeting each orbit of $\Gamma$, and such that the restriction $\mathcal{H}$ of $\Gamma$ to $U$ is
generated by a finite symmetric collection $S\subset \mathcal{H}$ so that each $g\in S$ is the restriction
of an element $\overline{g}$ of $\Gamma$ defined on some neighborhood of the closure of $\dom(g)$. The set $S$ is called a system of compact generation of $\Gamma$ in $U$.
\end{defn}

The holonomy pseudogroup of a lamination on a compact space is compactly generated.

\begin{defn}[\cite{Alvarez-Candel}]\label{d:recurrent system}
A finite symmetric family $E$ of generators of a pseudogroup $\Gamma$ of local
transformations of a locally compact space $T$ is said to be {\em recurrent\/} if there exists a
relatively compact open subset $V\subset T$ and some $R > 0$ such that, for any $x\in Z$, there exists $h\in \Gamma$ with $x\in\dom h$, $\length_E(h)<R$ and $h(x)\in V$.
\end{defn}

\begin{defn}\label{recurrent compactly generated pseudogroup}
According to the notation in Definitions~\ref{d:compact generation} and \ref{d:recurrent system}, a pseudogroup $\Gamma$ is called a recurrent compactly generated pseudogroup if $\Gamma$ is compactly generated and recurrent, and there exists a recurrent system $E$ of $\Gamma$ such that $V\subset U$, where $U$ and $V$ are like in Definitions~\ref{d:compact generation} and~\ref{d:recurrent system}, respectively.
\end{defn}

\begin{prop}\label{p:Cat_second(Gamma,LB,E)=Cat_second(HH,LB,S)}
Let $\Gamma$ be a recurrent compactly generated pseudogroup in a locally compact Polish space $T$ of finite dimension, let $\LB$ be a $\Gamma$-invariant measure, let $S$ be a system of compact generation in $U$, let $\HH$ be the restriction of $\Gamma$ to $U$ and let $E$ be a recurrent system of generators of $\Gamma$ on $V\subset U$. Then $\Cat_{\second}(\Gamma,\LB,E)=\Cat_{\second}(\HH,\LB,S)$.
\end{prop}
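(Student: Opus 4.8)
The plan is to show the two growth types coincide by establishing mutual inequalities between the defining sequences $\Cat(\Gamma,\LB,E,n)$ and $\Cat(\HH,\LB,S,m)$, with linear distortion in the index, which is exactly what is needed for equality in $\mathcal{E}$. The conceptual point is that $\HH$ (the restriction to $U$) and $\Gamma$ have the same orbits up to the recurrence bound $R$, so coverings and deformations for one can be transplanted to the other at the cost of a bounded length increase and without increasing the measure of the final image (by $\Gamma$-invariance of $\LB$). Throughout I will use the dimensional trick (Proposition~\ref{p:dimensional trick}) to keep the number of ``layers'' of a covering bounded by $\dim T + 1$, so that sums over countable disjoint refinements behave like a single open set up to a fixed multiplicative constant.

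First I would prove $\Cat_{\second}(\HH,\LB,S)\le\Cat_{\second}(\Gamma,\LB,E)$. Given an open covering $\UU$ of $T$ with deformations $h^U\equiv(h^U_i)$ of $E$-length $\le n$ realizing (up to $\varepsilon$) the value $\Cat(\Gamma,\LB,E,n)$, I restrict everything to $U$: the sets $U\cap U$ (using the relatively compact open $U$ from compact generation) cover $U$, and I push each component by a recurrence map of $E$-length $<R$ into $V\subset U$, then re-express the resulting composite in terms of $S$. Since $S$ generates $\HH$ and each $g\in E$ restricted to $U$ lies in some $\HH$-ball of bounded $S$-radius (here I use that both $S$ and $E$ are finite and that $\overline{g}\in\Gamma$ is defined near the closure of its domain, so a standard Lebesgue-number/compactness argument bounds the $S$-length of $g|_{U}$ by a constant $c$), a deformation of $E$-length $\le n$ becomes a deformation of $S$-length $\le c(n+R)$. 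The measures of the final images only decrease or stay controlled because $\LB$ is $\Gamma$-invariant and we may throw away a null-transverse remainder (boundaries of the refinement produced by the dimensional trick), exactly as in Proposition~\ref{p:homotopyholonomy} and the remark following the definition of $\Cat(\FF,\LB,n)$. This yields $\Cat(\HH,\LB,S,c(n+R))\le A\cdot\Cat(\Gamma,\LB,E,n)+\varepsilon$ for a constant $A$ absorbing the $\dim T+1$ factor, hence the desired inequality of growth types.

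Conversely, for $\Cat_{\second}(\Gamma,\LB,E)\le\Cat_{\second}(\HH,\LB,S)$, I start from an almost-optimal covering of $U$ by open sets with $S$-deformations of length $\le n$ into $U$, realizing $\Cat(\HH,\LB,S,n)$. Using recurrence, cover $T$ by the $\Gamma$-saturation of $V$ restricted to bounded $E$-length: every point of $T$ is moved into $V\subset U$ by some $h\in\Gamma$ with $\length_E(h)<R$, and by relative compactness of $V$ (and the finiteness of $E$) finitely many such ``return charts'' suffice; call this number $N$. Pulling back the optimal covering of $U$ through these finitely many return maps gives a covering of $T$; each returned piece is then deformed inside $U$ using the given $S$-deformation, and finally $S$ is rewritten in terms of $E$ (again finite-to-finite, bounded length). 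The length adds up to $\le c'(n+R)$ for a constant $c'$, and the measure bound picks up a factor at most $N(\dim T+1)$ from the (disjointified, via the dimensional trick) countable families of return charts and layers — but $N$ is independent of $n$, so this is only a multiplicative constant, harmless for growth types. Invariance of $\LB$ again ensures the returns do not inflate measures.

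The main obstacle is the second inequality: transplanting a \emph{covering of $U$} to a \emph{covering of $T$} while keeping the total final measure controlled by a \emph{constant} multiple of the $U$-side sum. Naively, saturating under $\Gamma$ could multiply measures uncontrollably; the fix is precisely the recurrent-compactly-generated hypothesis with $V\subset U$, which lets one use \emph{finitely many} bounded-length return maps (a fixed $N$) rather than the whole pseudogroup, together with the dimensional trick to make each such return a bounded union of disjoint open sets. I would be careful that the deformations produced are genuine deformations in the sense of the definition (combinations of $S_\infty$-maps on disjoint open sets), which forces one extra disjointification step — but since disjointification only splits a set without changing its image measure, it costs nothing. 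The remaining verifications (that discarded boundary sets are null-transverse, that the constants $c,c',N$ truly do not depend on $n$) are routine compactness arguments of the type already used repeatedly in the paper.
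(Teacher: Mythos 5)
Your overall strategy is the paper's: prove two numeric inequalities of the form $\Cat(\HH,\LB,S,Kn)\le A\,\Cat(\Gamma,\LB,E,n)$ and $\Cat(\Gamma,\LB,E,K'n)\le A'\,\Cat(\HH,\LB,S,n)$, using recurrence to return into $V\subset U$ with $E$-length $\le R$, bounded-length rewriting between the generating systems, the dimensional trick to keep covering multiplicity $\le\dim T+1$, and $\Gamma$-invariance of $\LB$ to control the measures of final images. Two points need attention, one cosmetic and one substantive. Cosmetic: passing to reciprocals reverses the preorder, so each of your paragraphs actually proves the opposite growth-type inequality from the one it announces (your first construction gives $\Cat_{\second}(\Gamma,\LB,E)\le\Cat_{\second}(\HH,\LB,S)$, not the other way around); since you carry out both constructions, equality still follows, but the labels should be swapped.

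The substantive gap is in the direction where an $E$-deformation of length $\le n$ on $T$ is converted into an $S$-deformation on $U$. Your key claim --- that each $g\in E$ \emph{restricted to $U$} has $S$-length bounded by a constant $c$, justified via the extensions $\overline{g}$ --- is not the right statement and would not suffice: a length-$n$ word in $E$ whose initial and final points lie in $U$ can leave $U$ at every intermediate step, so restrictions of the individual letters to $U$ cannot be concatenated; moreover the extension property of Definition~\ref{d:compact generation} concerns the elements of $S$, and it is the tool for the \emph{other} rewriting (expressing each $g\in S$ as a combination of maps in $E_K$), not for bounding $S$-lengths of $\HH$-elements. The repair is the paper's conjugation-and-telescoping device: by recurrence, every point of the orbit admits a returning map of $E$-length $\le R$ into $V\subset U$; for each letter $h\in E$ and returning maps $f,g$ one has $f\circ h\circ g^{-1}\in\HH$ of $S$-length $\le K'$ (a compactness assertion), and choosing the returning maps coherently one writes, locally,
\[
h_{i_n}\circ\dots\circ h_{i_1}=\bigl(f_{i_n}\circ h_{i_n}\circ g^{-1}_{i_n}\bigr)\circ\dots\circ\bigl(f_{i_1}\circ h_{i_1}\circ g^{-1}_{i_1}\bigr),
\]
which yields $\Cat(\HH,\LB,S,K'n)\le\Cat(\Gamma,\LB,E,n)$. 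Without this per-letter conjugation your bound ``$S$-length $\le c(n+R)$'' is unjustified; with it, the rest of your argument (finitely many return charts since $E$ is finite and lengths are $\le R$, disjointification, discarding null-transverse boundaries, constant factors absorbed into the growth type) goes through exactly as in the paper.
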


\begin{proof}
Since $E$ is recurrent and using the trick of Proposition~\ref{p:homotopyholonomy}, we can suppose that we have a covering $\{U_1,\dots,U_N\}$ of $T$ and deformations $h_i:U_i\to V$ with $\length_E(h_i)\le R$. Since $U$ is relatively compact, and any $g\in S$ extends to a map $\overline{g}\in\Gamma$ defined in a neighborhood of $\overline{U}$, it follows that any map $g\in S$ is a combination of maps in $E_K$ for $K\in\N$ large enough. Using the decomposition trick once more, we obtain that any deformation of $\length_S\le n$ induces, up to a null transverse set, a deformation of length $\le Kn$ relative to $E$ for $K\in\N$ large enough. We can suppose, as usual, that $N=\dim T + 1$, and therefore
\[
  \Cat(\Gamma,\LB,Kn + R,E)\leq (\dim T + 1)\cdot\Cat(\HH,\LB,S,n)\;,
\]
obtaining $\Cat_{\second}(\HH,\LB,S)\leq\Cat_{\second}(\Gamma,\LB,E)$.

The reverse inequality is easier by using the recurrency in $E$. There exists a constant $K'$ such that the composition $f\circ h\circ g^{-1}$ of any $h\in E$ with two returning maps, $f:\Image h \to V\subset U$, $g:\dom_h\to V\subset U$, of length $\le R$ relative to $E$ is of length $\le K'$ relative to $S$ for $K'$ large enough. Therefore $\Cat(\HH,\LB,K'n,S)\leq \Cat(\Gamma,\LB,n,E)$ since, locally,
\[
  h_{i_n}\circ\dots\circ h_{i_1}=f_{i_n}\circ h_{i_n}\circ g^{-1}_{i_n}\circ\dots\circ f_{i_1}\circ h_{i_1}\circ g^{-1}_{i_1}\;,
\]
where $f_{i_j}$ and $g_{i_j}$ are returning maps of length $\le R$ relative to $E$.
\end{proof}

\begin{cor}\label{c:independence in the set of generators}
The secondary \LB-category of recurrent compactly generated pseudogroups is independent of the choice of the recurrent set of generators.
\end{cor}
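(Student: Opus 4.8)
The plan is to deduce the corollary from Proposition~\ref{p:Cat_second(Gamma,LB,E)=Cat_second(HH,LB,S)} by fixing a single ``model'' for its right-hand side. I would fix once and for all a relatively compact open set $U\subset T$ meeting every orbit of $\Gamma$ together with a system $S$ of compact generation of $\Gamma$ in $U$ (both exist because $\Gamma$ is compactly generated), and write $\HH=\Gamma|_U$. The goal is then to show that for \emph{every} recurrent system of generators $E$ of $\Gamma$ one has $\Cat_{\second}(\Gamma,\LB,E)=\Cat_{\second}(\HH,\LB,S)$; since the right-hand side does not involve $E$, the corollary follows at once.

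The only point needing care is that the recurrence open set attached to an arbitrary $E$ need not be contained in $U$, whereas Proposition~\ref{p:Cat_second(Gamma,LB,E)=Cat_second(HH,LB,S)} requires this inclusion. So the first step is to observe that any recurrent system $E$ is automatically recurrent with recurrence set $U$ itself, at the cost of enlarging the constant. Indeed, let $V$ and $R$ be a recurrence set and constant for $E$. For each $y\in\overline V$, since $U$ meets the orbit of $y$ there is $g_y\in\Gamma$ with $y\in\dom g_y$ and, after shrinking the domain, $g_y(\dom g_y)\subset U$; moreover $\length_E(g_y)<\infty$ because $E$ generates $\Gamma$. By compactness of $\overline V$, finitely many such domains $\dom g_{y_1},\dots,\dom g_{y_k}$ cover $\overline V$; put $R'=R+\max_j\length_E(g_{y_j})$. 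Given $x\in T$, choose $h\in\Gamma$ with $x\in\dom h$, $\length_E(h)<R$ and $h(x)\in V$; then $h(x)\in\dom g_{y_j}$ for some $j$, and $g_{y_j}\circ h$ has $x$ in its domain, $E$-length $<R'$, and image point in $U$. Hence $E$ is recurrent on $U\subset U$.

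With this in hand, Proposition~\ref{p:Cat_second(Gamma,LB,E)=Cat_second(HH,LB,S)} applies verbatim to the triple $(U,S,E)$ and yields $\Cat_{\second}(\Gamma,\LB,E)=\Cat_{\second}(\HH,\LB,S)$. As this holds for every recurrent system $E$ while the right-hand side is fixed, all the growth types $\Cat_{\second}(\Gamma,\LB,E)$ coincide, which is exactly the assertion. The same reasoning, applied with two systems of compact generation $(U,S)$ and $(U',S')$ and a fixed $E$ which (by the previous step) is recurrent on both $U$ and $U'$, also shows that $\Cat_{\second}(\Gamma|_U,\LB,S)$ does not depend on the chosen system of compact generation, so the common value is canonically attached to $(\Gamma,\LB)$.

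I expect the only genuine obstacle to be the uniform recurrence bound of the second paragraph, i.e.\ checking that the constant $R'$ can be chosen independently of $x$; this is precisely where relative compactness of $V$ and the finiteness of $E$-lengths of elements of $\Gamma$ (since $E$ generates $\Gamma$) are essential. Everything else is a direct appeal to Proposition~\ref{p:Cat_second(Gamma,LB,E)=Cat_second(HH,LB,S)}.
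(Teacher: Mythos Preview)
Your argument is correct, but it takes a different route from the paper's. The paper does not try to show that an arbitrary recurrent system $E$ is already recurrent on the fixed $U$; instead, given two recurrent systems $E$ (with $V\subset U$, compact-generation data $S$) and $E'$ (with $V'\subset U'$, data $S'$), it simply forms the union $E''=E\cup E'$. Since $E\subset E''$, one has $\length_{E''}\le\length_E$, so $E''$ is recurrent on $V\subset U$; likewise $E''$ is recurrent on $V'\subset U'$. Proposition~\ref{p:Cat_second(Gamma,LB,E)=Cat_second(HH,LB,S)} then gives $\Cat_{\second}(\HH,\LB,S)=\Cat_{\second}(\Gamma,\LB,E'')=\Cat_{\second}(\HH',\LB,S')$, and one more application of the proposition to the original pairs $(U,S,E)$ and $(U',S',E')$ closes the chain.

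Your approach trades this union trick for a compactness argument showing that the recurrence set can always be moved into the fixed $U$. This is a bit more work (you need the uniform bound $R'$, which is exactly where relative compactness of $\overline V$ and local finiteness of $E$-length enter), but it has two payoffs: it anchors everything to a single reference $(\HH,S)$ from the start, and it applies to \emph{any} recurrent $E$, not only to those that come already packaged with a compatible compact-generation set as in Definition~\ref{recurrent compactly generated pseudogroup}. One small caveat: your sentence ``$\length_E(g_y)<\infty$ because $E$ generates $\Gamma$'' is only safe after shrinking $\dom g_y$ to a small enough neighborhood of $y$, since the $E$-length of a pseudogroup map need not be globally finite; you already allow such shrinking, so this is harmless.
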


\begin{proof}
If $E$ and $E'$ are two different recurrent systems of generators for $\Gamma$ relative to open subsets $V\subset U$ and $V'\subset U'$, respectively, where $U$ and $U'$ are relative compact sets associated to the systems of compact generation $S$ and $S'$, respectively, and let $\HH$ and $\HH'$ be the restrictions of $\Gamma$ to $U$ and $U'$, respectively. Then $E''=E\cup E'$ is another recurrent system compatible with $V\subset U$ and $V'\subset U'$.
Therefore, by using Proposition~\ref{p:Cat_second(Gamma,LB,E)=Cat_second(HH,LB,S)}, we obtain $\Cat_{\second}(\HH,\LB,S)=\Cat_{\second}(\Gamma,\LB,E'')=\Cat_{\second}(\HH',\LB,S')$. By the same argument $\Cat_{\second}(\Gamma,\LB,E)=\Cat_{\second}(\Gamma,\LB,E')$.
\end{proof}

\begin{rem}
According to Corollary~\ref{c:independence in the set of generators}, we can remove the recurrent set of generators in the notation of the secondary \LB-category of pseudogroups, using simply $\Cat_{\second}(\Gamma,\LB)$.
\end{rem}

\begin{prop}
$\Cat_{\second}(\Gamma,\LB)$ is an invariant by measure preserving equivalences between recurrent compactly generated pseudogroups with invariant measures.
\end{prop}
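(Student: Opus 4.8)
The plan is to realise the given measure preserving equivalence by a single ``bridge'' pseudogroup on the disjoint union $T\sqcup T'$ and then read off the equality from Proposition~\ref{p:Cat_second(Gamma,LB,E)=Cat_second(HH,LB,S)} applied at both ends. Write $\Gamma$ for the pseudogroup on $T$ and $\Gamma'$ for the one on $T'$. In the terminology of Section~\ref{s:invariance}, a measure preserving equivalence between them amounts to a pseudogroup $\Gamma''$ of local transformations of $T\sqcup T'$ (with $T$, $T'$ clopen) such that $\Gamma''|_T=\Gamma$ and $\Gamma''|_{T'}=\Gamma'$, every $\Gamma''$-orbit meets both $T$ and $T'$, and there is a $\Gamma''$-invariant measure $\LB''$ restricting to $\LB$ on $T$ and to $\LB'$ on $T'$; as in the earlier sections I would assume these measures regular and finite on compact sets. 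First I would record two elementary consequences of $T$ being clopen in $T\sqcup T'$: for every open $U\subset T$ one has $\Gamma''|_U=\Gamma|_U$ (a germ of $\Gamma''$ at a point of $U$ whose image lies in $T$ is the germ of a local transformation of $T$, hence of $\Gamma$), and the $\Gamma''$-orbit of a point of $T$ meets $T$ exactly in its $\Gamma$-orbit.

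Next I would verify that $\Gamma''$ is again a recurrent compactly generated pseudogroup. Fix for $\Gamma$ a system of compact generation $S$ in a relatively compact open $U\subset T$ and a recurrent system $E$ with associated relatively compact open $V\subset U$, and likewise $S'$, $U'$, $E'$, $V'$ for $\Gamma'$. Since every $\Gamma''$-orbit meets $T$ and $U$ meets every $\Gamma$-orbit, $U$ (and likewise $U'$) is relatively compact, open in $T\sqcup T'$, and meets every $\Gamma''$-orbit; moreover $\Gamma''|_U=\Gamma|_U=\HH$ is generated by $S$, whose elements extend to neighbourhoods of the closures of their domains already inside $\Gamma$, hence inside $\Gamma''$, so $S$ is a system of compact generation of $\Gamma''$ in $U$ (and $S'$ in $U'$). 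Using that the equivalence itself is compactly generated, I would choose a finite symmetric family $\Phi\subset\Gamma''$ of transition maps (local homeomorphisms between open subsets of $T$ and of $T'$), each extending to a neighbourhood of the closure of its domain, such that every $\Gamma''$-orbit meets the union of their domains, and set $E''=E\cup E'\cup\Phi$. Then $E''$ generates $\Gamma''$ and is recurrent: a point of $T$ is carried into $V$ by a word in $E$ of bounded length, and a point of $T'$ is carried into $V'$ by a word in $E'$ of bounded length, then into $T$ by a map of $\Phi$, then into $V$ by a word in $E$ of bounded length; by the symmetric routing $E''$ is recurrent with associated open set $V\subset U$ and also with $V'\subset U'$. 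Thus $\Gamma''$ is a recurrent compactly generated pseudogroup; and $\Cat(\Gamma'',\LB'')=0$ because nullity of the primary $\LB$-category is an invariant of the pseudogroup with its invariant measure \cite{Menino}, so $\Cat_{\second}(\Gamma'',\LB'')$ is defined.

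Then I would apply Proposition~\ref{p:Cat_second(Gamma,LB,E)=Cat_second(HH,LB,S)} to $\Gamma''$ twice, once with the data $(U,S,E'')$ and once with $(U',S',E'')$ — legitimate precisely because $E''$ is recurrent relative to both $V\subset U$ and $V'\subset U'$ — to obtain
\[
  \Cat_{\second}(\HH,\LB,S)=\Cat_{\second}(\Gamma'',\LB'',E'')=\Cat_{\second}(\HH',\LB',S')\;,
\]
with $\HH=\Gamma|_U$ and $\HH'=\Gamma'|_{U'}$. Applying the same proposition to $\Gamma$ with $(U,S,E)$ and to $\Gamma'$ with $(U',S',E')$ gives $\Cat_{\second}(\Gamma,\LB)=\Cat_{\second}(\HH,\LB,S)$ and $\Cat_{\second}(\Gamma',\LB')=\Cat_{\second}(\HH',\LB',S')$, and chaining these displays yields $\Cat_{\second}(\Gamma,\LB)=\Cat_{\second}(\Gamma',\LB')$.

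The hard part is the middle step: justifying that a measure preserving equivalence of recurrent compactly generated pseudogroups is realised by a recurrent compactly generated bridge pseudogroup $\Gamma''$ carrying a compatible invariant measure, and in particular that the transition family $\Phi$ can be chosen finite, with relatively compact domains whose $\Gamma''$-orbits cover $T\sqcup T'$ and with the extension-to-the-closure property. This is exactly where compact generation of the equivalence is used; once it is in place, the recurrency and compact generation of $\Gamma''$ are routine, and the remainder is a bookkeeping assembly of Proposition~\ref{p:Cat_second(Gamma,LB,E)=Cat_second(HH,LB,S)} on both ends, in the spirit of the proof of Corollary~\ref{c:independence in the set of generators}.
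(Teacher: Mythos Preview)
Your approach is correct and is essentially the same as the paper's: build the bridge pseudogroup $\Gamma_\Phi$ on $T\sqcup T'$ generated by $\Gamma$, $\Gamma'$ and the equivalence $\Phi$, observe that it is recurrent and compactly generated with the combined invariant measure, and then apply Proposition~\ref{p:Cat_second(Gamma,LB,E)=Cat_second(HH,LB,S)} on both sides to get $\Cat_{\second}(\Gamma,\LB)=\Cat_{\second}(\Gamma_\Phi,\LB_\Phi)=\Cat_{\second}(\Gamma',\LB')$. The paper simply asserts the recurrent compact generation of $\Gamma_\Phi$ in one line, whereas you spell out the construction of $E''=E\cup E'\cup\Phi$ and the recurrence routing; your added care about choosing $\Phi$ finite with the extension-to-the-closure property is exactly the content hidden behind that assertion.
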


\begin{proof}
Let $\Phi$ a measure preserving equivalence between recurrent compactly generated pseudogroups with invariant measures, $(\Gamma,T,\LB)$ and $(\Gamma',T',\LB')$. The pseudogroup $\Gamma_{\Phi}$ on $T_\Phi=T\sqcup T'$ generated by $\Phi$, $\Gamma$ and $\Gamma'$ is recurrent and compactly generated, and the combination of the measures $\LB$ and $\LB'$, $\LB_\Phi$ on $T_\Phi$, is $\Gamma_\Phi$-invariant. So $\Cat_{\second}(\Gamma,\LB)=\Cat_{\second}(\Gamma_\Phi,\LB_\Phi)=\Cat_{\second}(\Gamma',\LB')$ by Proposition~\ref{p:Cat_second(Gamma,LB,E)=Cat_second(HH,LB,S)}.
\end{proof}

\begin{rem}
The holonomy pseudogroup given by a regular foliated atlas of a lamination in a compact space is recurrent and compactly generated. Observe that the coarse quasi-isometry type of the orbits is independent of the choice of a recurrent system of compact generation, which was the first reason to introduce the concept of recurrency on compactly generated pseudogroups \cite{Alvarez-Candel}.
\end{rem}

\begin{prop}
Let $(X,\FF,\LB)$ be a lamination on a compact space with a transverse invariant measure such that $\Cat(\FF,\LB)=0$. Let $\Gamma$ be the holonomy pseudogroup of $\FF$ on a complete transversal $T$. Then $\Cat_{\second}(\FF,\LB)=\Cat_{\second}(\Gamma,\LB)$.
\end{prop}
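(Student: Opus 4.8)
The plan is to show that the two secondary categories agree by comparing the geometric deformations used in the leafwise definition with the algebraic deformations of the holonomy pseudogroup, using the fact that a tangential homotopy locally follows a chain of plaques of the regular atlas. First I would fix a finite regular foliated atlas $\UU$ of $\FF$, producing a holonomy pseudogroup representative $\Gamma$ on the complete transversal $T=\bigsqcup_i S_i$ together with its finite symmetric system of generators $S^\UU$; by Proposition~\ref{p:Cat_second(Gamma,LB,E)=Cat_second(HH,LB,S)} and Corollary~\ref{c:independence in the set of generators} the pseudogroup-theoretic secondary category computed from any recurrent system of compact generation equals $\Cat_{\second}(\Gamma,\LB)$, and by Proposition~\ref{p:Cat_II is independent of UU} and Proposition~\ref{p:restricttotransversal} the leafwise secondary category can be computed using $\UU$ and only contractions terminating in $T$. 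This reduces the statement to an equality (up to the growth-type preorder $\preceq$) between $\Cat(\FF,\LB,n,\UU,T)$ and $\Cat(\Gamma,\LB,S^\UU,n)$ for suitable linearly related arguments.

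For the inequality $\Cat_{\second}(\FF,\LB)\le\Cat_{\second}(\Gamma,\LB)$, I would mimic the argument already carried out in the proof of Proposition~\ref{p:upperboundsecondary}: given an open covering of $T$ by sets $U$ each equipped with a pseudogroup deformation $h^U\equiv(h^U_i)$ of length $\le n$ with $\sum_U\LB(h^U(U))$ close to $\Cat(\Gamma,\LB,S^\UU,n)$, I would saturate each component $U_i$ into the charts of $\UU$ and use the contractibility of plaques to build a tangential contraction whose final transversal is exactly $h^U_i(U_i)$; since each generator in $S^\UU$ is realized by a holonomy transport of bounded leafwise length (bounded by the atlas), a composition of $\le n$ generators is realized by a tangential deformation of length $\le Kn$ for a constant $K$ depending only on $\UU$. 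A null transverse set is generated in this cutting process (Lemma~\ref{l:disgregation} and the remark following the definition of $\Cat(\FF,\LB,n)$), but it contributes arbitrarily little measure via length-$1$ contractions in the charts. Hence $\Cat(\FF,\LB,Kn,\UU,T)\le\Cat(\Gamma,\LB,S^\UU,n)$, giving one direction.

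For the reverse inequality $\Cat_{\second}(\Gamma,\LB)\le\Cat_{\second}(\FF,\LB)$, I would start from a covering of $X$ by $\FF$-categorical open sets $U$ with tangential contractions $H^U$ of length $\le n$ finishing in $T$ and total final measure close to $\Cat(\FF,\LB,n,\UU,T)$. By Lemma~\ref{l:vogt0} each $H^U(U\times\{1\})$ is a countable union of local transversals, and by the decomposition trick of Proposition~\ref{p:homotopyholonomy} I may refine $U$ into open pieces $U_{i}$ (plus a null transverse remainder) on each of which the contraction, read through the projection maps of $\UU$, is literally a holonomy map obtained by composing $\le n$ of the cocycle maps $g_{ij}$, i.e.\ an element of $S^\UU_n$; this uses that a tangential homotopy locally follows a chain of charts~\cite{Vogt-Singhof}. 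Intersecting with $T$, the images of these pieces cover $T$, their measures are bounded by the final measure of $H^U$ by invariance, and the null remainder is handled as before. This yields $\Cat(\Gamma,\LB,S^\UU,n)\le\Cat(\FF,\LB,n,\UU,T)$ up to $\varepsilon$, hence the second direction. Combining the two inequalities and passing to growth types gives $\Cat_{\second}(\FF,\LB)=\Cat_{\second}(\Gamma,\LB)$.

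The main obstacle I anticipate is the careful bookkeeping of the null transverse sets and the open refinements when translating back and forth: in each direction one cuts a categorical open set (or a deformation domain) into countably many pieces on which the deformation has a clean holonomy description, at the cost of a closed null transverse set, and one must check that all these cuttings can be performed simultaneously across a \emph{finite} covering (using the dimensional trick of Proposition~\ref{p:dimensional trick}, $\Cat(\FF)\le\dim\FF+1$, and compactness) so that the comparison constant $K$ and the additive error remain uniform in $n$. Everything else — the linear distortion of lengths between plaque chains and holonomy word length, the invariance of $\LB$ under the final transversals, the irrelevance of null transverse sets — is already packaged in the cited lemmas and propositions.
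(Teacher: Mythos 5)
Your proposal follows essentially the same route as the paper: the passage from leafwise contractions to pseudogroup deformations (reading the final stage of a length-$n$ contraction through the projections of the charts) is exactly the direction the paper treats as immediate, namely $\Cat(\Gamma,\LB,S^\UU,n)\le\Cat(\FF,\LB,\UU,n,T)$, and the converse construction (saturating the domains of a pseudogroup deformation into the charts, contracting along plaques and then following the holonomy, with the refinement organized by Proposition~\ref{p:dimensional trick} and null transverse sets absorbed by length-$1$ contractions) is the paper's argument as well. The only slip is that you have the two secondary-category inequalities labelled the wrong way around: $\Cat(\FF,\LB,Kn,\UU,T)\le\Cat(\Gamma,\LB,S^\UU,n)$ gives $1/\Cat(\Gamma,\LB,S^\UU,n)\le 1/\Cat(\FF,\LB,Kn,\UU,T)$ and hence $\Cat_{\second}(\Gamma,\LB)\le\Cat_{\second}(\FF,\LB)$, not the inequality you announce, while $\Cat(\Gamma,\LB,S^\UU,n)\le\Cat(\FF,\LB,n,\UU,T)$ yields $\Cat_{\second}(\FF,\LB)\le\Cat_{\second}(\Gamma,\LB)$. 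Since you do establish both finite-scale estimates, the equality of growth types still follows; only the attribution of which direction each estimate proves needs to be corrected.
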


\begin{proof}
Let $\UU=\{U_1,\dots,U_M\}$ be a regular foliated atlas of $\FF$. The inequality $\Cat_{\second}(\FF,\LB)\leq \Cat_{\second}(\Gamma,\LB)$ is obvious since $\Cat(\Gamma,\LB,S^\UU,n)\leq\Cat(\FF,\LB,\UU,n,T)$, clearly. By using the dimensional trick (see Proposition~\ref{p:dimensional trick}),  we can show that $\Cat(\FF,\LB,\UU,n)\leq M(m+1)\Cat(\Gamma,\LB,S^\UU,n)$, where $m$ is the dimension of $X$. Let $\{V_i\}_{i\in\N}$ be an open covering of $T$, a complete transversal associated to the atlas $\UU$. Let $\str_{i}(B)$ be the saturation of a set $B$ relative to the chart $U_i\in\UU$. Of course, $\{\str_j(V_i\cap T_j)\}_{i,j}$ is an open covering of $X$. Now let $D_1,\dots,D_{\dim X + 1}$ be an open covering of $X$  such that $D_i=\bigsqcup_j D_{ij}$, where the $D_{ij}$ are open and mutually disjoint, and the whole collection $\{D_{ij}\}_{i,j}$ is a refinement of $\{\str_j(V_i\cap T_j)\}_{i,j}$ (by Proposition~\ref{p:dimensional trick}). Let $(k,l)=I(D_{ij})$ be the first indexes relative to the lexicographic order such that $D_{ij}\subset \str_l(V_k\cap T_l)$. Let $W_{ikl}$ be the union of sets $D_{ij}$ such that $I(D_{ij})=(k,l)$. Each set $W_{ikl}$ contracts to $V_k$ by a deformation of length $0$ (the contraction of the chart $U_l$ to its transversal) and therefore any deformation $h_i$ of length $n$ of $V_i$ induces a tangential deformation $H$ of each $W_{ikl}$ of length $n$ such that $H(W_{ikl}\times\{1\})\subset h(V_i)$. Hence
\[
  \sum_{ikl}\LB(H(W_{ikl}\times\{1\}))\leq M(m+1)\sum_i\LB(h_i(V_i))\;.\qed
\]
\renewcommand{\qed}{}
\end{proof}

\begin{cor}
The secondary \LB-category of compact laminations with transverse invariant measures is a transverse invariant {\rm(}it only depends on the recurrent compactly generated representatives of the holonomy pseudogroup with the corresponding invariant measure{\rm)}.
\end{cor}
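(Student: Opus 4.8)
The plan is to deduce the final Corollary directly from the two preceding Propositions by assembling the chain of identifications established in the section. The statement asserts that $\Cat_{\second}(\FF,\LB)$ depends only on the transverse data, namely on a recurrent compactly generated representative of the holonomy pseudogroup together with its invariant measure. First I would fix a lamination $(X,\FF,\LB)$ on a compact space with $\Cat(\FF,\LB)=0$, and a regular foliated atlas $\UU$; by the remark preceding the Corollary, the holonomy pseudogroup $\Gamma$ of $\FF$ on a complete transversal $T$ associated to $\UU$ is recurrent and compactly generated, and $\LB$ restricts to a $\Gamma$-invariant measure on $T$. The previous Proposition then gives $\Cat_{\second}(\FF,\LB)=\Cat_{\second}(\Gamma,\LB)$, so the lamination-level invariant is computed entirely on the transversal side.

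Next I would invoke Corollary~\ref{c:independence in the set of generators}: the quantity $\Cat_{\second}(\Gamma,\LB)$ is independent of the choice of recurrent system of generators, so it is well defined as an invariant of the pair $(\Gamma,\LB)$ (once $\Gamma$ is known to be recurrent compactly generated), which justifies the notation $\Cat_{\second}(\Gamma,\LB)$ without reference to generators. It remains to handle the dependence on the choice of complete transversal and of representative of the holonomy pseudogroup. Here I would use the Proposition stating that $\Cat_{\second}(\Gamma,\LB)$ is invariant under measure preserving equivalences between recurrent compactly generated pseudogroups with invariant measures: any two recurrent compactly generated representatives of the holonomy pseudogroup of $\FF$, carrying the measures induced by the same transverse invariant measure $\LB$, are related by such an equivalence (this is exactly the meaning of Haefliger equivalence refined by recurrency, cf.\ \cite{Alvarez-Candel}), hence have the same secondary \LB-category.

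Combining these, for any two recurrent compactly generated representatives $(\Gamma,T,\LB)$ and $(\Gamma',T',\LB')$ of the holonomy pseudogroup of $\FF$ with the induced measures, we get
\[
  \Cat_{\second}(\FF,\LB)=\Cat_{\second}(\Gamma,\LB)=\Cat_{\second}(\Gamma',\LB')\;,
\]
so the secondary \LB-category is a transverse invariant in the stated sense. I expect the only genuine subtlety to be the verification that a measure preserving equivalence in the sense used in the invariance Proposition is precisely what relates two recurrent compactly generated representatives of the same holonomy pseudogroup with compatible invariant measures; once that identification is granted, the Corollary is a formal consequence of the three results just cited. Everything else is bookkeeping: checking that the recurrent-compactly-generated hypothesis is preserved when passing between representatives, which is the content of the preceding remark.
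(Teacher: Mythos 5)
Your proposal is correct and follows essentially the same route as the paper, which leaves this corollary as an immediate consequence of the preceding results: the equality $\Cat_{\second}(\FF,\LB)=\Cat_{\second}(\Gamma,\LB)$ for the holonomy pseudogroup on a complete transversal, the independence of the recurrent generating system (Corollary~\ref{c:independence in the set of generators}), and the invariance under measure preserving equivalences between recurrent compactly generated pseudogroups. Your assembly of these three facts, including the observation that any two such representatives with the induced measures are related by a measure preserving (Haefliger) equivalence, is exactly the intended argument.
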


\section{Final comments}

Many of the properties of tangential category and \LB-category can be adapted to secondary \LB-category, like the upper semicontinuity \cite{Vogt-Singhof} or the estimation by using the measure of the leafwise critical points \cite{Menino,Menino3}.

Considering its relation with the growth of groups, secondary \LB-category is an intermediate invariant which can interconnect analitical properties (and variational problems)  with (pseudo)group theory. 

It can be used in both directions. The secondary \LB-category could  provide interesting results for the non-Rohlin dynamics.

\begin{ack}
This paper contains part of my PhD thesis, whose advisor is Prof.
Jes\'{u}s A. \'{A}lvarez L\'{o}pez.
\end{ack}

\end{document}